\let\OLDthebibliography\thebibliography
\renewcommand\thebibliography[1]{
  \OLDthebibliography{#1}
  \setlength{\parskip}{3pt}
  \setlength{\itemsep}{0pt plus 0.3ex}
}
\def\numberlikeadb{\global\def\theequation{\thesection.\arabic{equation}}}
\newtheorem{theorem}{Theorem}[section]
\newtheorem{corollary}[theorem]{Corollary}
\newtheorem{definition}[theorem]{Definition}
\newtheorem{remark}[theorem]{Remark}
\newtheorem{example}[theorem]{Example}
\definecolor{orange}{rgb}{1,0.5,0}
\begin{document}

\title{Compound Poisson approximation of subgraph counts in stochastic block models with multiple edges}
\author{Matthew Coulson\footnote{School of Mathematics, University of Birmingham, Edgbaston, Birmingham B15 2TT, UK},\, Robert E. Gaunt\footnote{School of Mathematics, The University of Manchester, Manchester M13 9PL, UK}\, and
 Gesine Reinert\footnote{Department of Statistics, University of Oxford, 24-29 St Giles', Oxford OX1 3LB, UK}  
}

\date{\today} 
\maketitle

\vspace{-10mm}

\begin{abstract}We use the Stein-Chen method to obtain compound Poisson approximations for the distribution of the number of subgraphs in a generalised stochastic block model which are isomorphic to some fixed graph.  This model generalises the classical stochastic block model to allow for the possibility of multiple edges between vertices.  Both the cases that the fixed graph is a simple graph and that it has multiple edges are treated.  The former results apply when the fixed graph is a member of the class of strictly balanced graphs and the latter results apply to a suitable generalisation of this class to graphs with multiple edges.  We also consider a further generalisation of the model to pseudo-graphs, which may include self-loops as well as multiple edges, and establish a parameter regime in the multiple edge stochastic block model in which Poisson approximations are valid.  The results are applied to obtain Poisson and compound Poisson approximations  (in different regimes) for subgraph counts in the Poisson stochastic block model and degree corrected stochastic block model of Karrer and Newman \cite{kn11}.
\end{abstract}

\noindent{{\bf{Keywords:}}} Stochastic block model; multiple edges; subgraph counts; compound Poisson approximation; Stein-Chen method; pseudo-graphs

\noindent{{{\bf{AMS 2010 Subject Classification:}}} Primary 90B15, 62E17, 60F05, 05C80

\section{Introduction}\label{intro}

Small subgraph counts can be used as summary statistics for large random graphs; indeed in some graph models they appear as sufficient statistics, see \cite{frank}. Furthermore, many networks are conjectured to have characteristic over- or under-represented motifs (small subgraphs), see for example \cite{alon}.  Moreover, statistics based on small subgraph counts can be used to compare networks, as in  \cite{sarajlic13, wegner}.   To determine which small subgraphs are unusual, assessing the distribution of such motifs is key.  The mean and variance of subgraph counts for some common random graph models are  given by \cite{picard}, whilst the focus of this paper concerns distributional approximation of subgraph counts.  

A powerful and popular tool for deriving distributional approximations for subgraph counts is the Stein-Chen method \cite{stein, chen 0}.  The method was first used to derive approximations for subgraph counts in \cite{b82} and a comprehensive account of Poisson approximation of subgraph counts is given in the book \cite{bhj92}.  To date, distributional approximations for subgraph counts have mostly been derived for simple random graph models (simple graphs are graphs with no self-loops or multiple edges).  However, networks with multiple edges (often referred to as multigraphs) arise naturally in many real-world networks and also in important synthetic networks models, such as the configuration model. For example in collaboration networks as analysed in  \cite{girvannewman}, vertices are authors, and two authors are linked by an edge if they have co-authored at least one paper.  One can argue that edges in collaboration networks simplify relationships and that the number of joint papers contains important information. Another example is the Molloy-Reed construction of realisations from the configuration model \cite{molloyreed} which allows self-loops as well as multiple edges. A collection of currently 90 networks with multi-edges can be found in the KONECT data base, \url{http://konect.uni-koblenz.de/}. 
 In this paper, we derive distributional approximations for subgraph counts in a generalised stochastic block model, which includes the Poisson stochastic block model and the degree-corrected stochastic blockmodel of \cite{kn11} as  special cases.  

The stochastic block model (SBM) was introduced originally for directed graphs by \cite{holland} and generalised to other graphs by   \cite{ns01}.  It is also called the Erd\H{o}s-R\'{e}nyi Mixture Model  in \cite{daubin} and in theoretical computer science it is known as the Planted Partition Model \cite{condonkarp}.  It has  a wide range of applications; see the survey \cite{matias} and references therein.  In this paper, we shall use the following generalisation of the classical SBM to one with multiple edges.  Consider an undirected random graph on $n$ vertices, with no self-loops, in which the vertices are spread among $Q$ hidden classes. Letting   $f= (f_1,\ldots,f_Q)$ be a vector in $[0,1]^Q$ such that $\sum_{q=1}^Q f_q =1$,  the class label of a vertex is drawn from a multinomial distribution ${\cal{M}}(1, f)$, and class assignments are independent of each other. The number of edges $\tilde{Y}_{i,j}$ between vertices $i$ and $j$ (the tilde in the notation distinguishes these random variable from the random variables $Y_{a,b}$ defined in (\ref{yab}))  are independent conditionally on the class of the vertices, and the probability depends only on the classes of the vertices: denoting by $c_v \in \{1, \ldots, Q\}$ the class of vertex $i$,
\begin{equation}\label{hom}\mathbb{P}(\tilde{Y}_{i,j}=k\,|\,c_i= a, c_j = b)=\pi_{a,b,k}, \quad k=0,1,2,\ldots,
\end{equation}
and $\sum_{k=0}^\infty\pi_{a,b,k}=1$ for all $a$ and $b$.  We shall refer to this model as a \emph{stochastic block multigraph model} (SBMM) and denote it by $SBMM(n,\pi,f)$.  The model of \cite{kn11} corresponds to the case of Poisson probabilities $\pi_{a,b,k}=\mathrm{e}^{-\omega_{a,b}}\omega_{a,b}^k/k!$.  If $\pi_{a,b,k}=0$ for $k\geq2$, then the SBMM model reduces to the classical SBM.   Further, if $\pi_{a,b,0}=1-p, \pi_{a,b,1}=p, \pi_{a,b,k}=0$ for all $k \geq 2$, the SBM reduces to the Erd\H{o}s-R\'{e}nyi random graph model, which we denote by $\mathscr{G}(n,p)$. In Section \ref{sec5}, we shall also introduce a generalisation of the SBMM to pseudo-graphs (graphs which  allow for self-loops as well as multiple edges). In this paper, it is assumed that $\pi$ and $f$ are known,  and that  $f_1,\ldots,f_Q>0$; for estimating these quantities for the classical SBM see, for example, \cite{airoldi, latoucherobin, 
matias, olhedewolfe}.  

Most of this paper shall focus on the case that vertices in the same class are stochastically equivalent, that is the probabilities are given by (\ref{hom}).  However, as noted by \cite{kn11}, such models may be a poor fit when modelling networks with hubs or highly varying degrees within communities.   Indeed, \cite{kn11} introduced the degree-corrected stochastic block model in which the probabilities are given by $\pi_{a,b,i,j,k}=\mathrm{e}^{-\theta_i\theta_j\omega_{a,b}}(\theta_i\theta_j\omega_{a,b})^k/k!$, where $\theta_i$ relates to an expected degree of vertex $i$.  All our results and proofs generalise easily to such heterogeneous vertices, in which the right-hand side of (\ref{hom}) is of the form $\pi_{a,b,i,j,k}$ (see Corollary \ref{cor88} and Example \ref{ex9}).    

In \cite{cgr16}, it is shown that the distribution of the number copies of a fixed graph $G$ in the $SBM(n,\pi,f)$ model is well approximated by an appropriate Poisson distribution if $G$ is a member of the class of strictly balanced graphs (defined in Section \ref{sec2.1}) as long as $\pi_{a,b,1}$ is not too large for all $a$ and $b$.  Explicit bounds for the rate of convergence (in the total variation distance) are also given in  \cite{cgr16}.  The results of \cite{cgr16} generalise well-known results for the $\mathscr{G}(n,p)$ model, such as Theorem 5.B in  \cite{bhj92}.  For normal approximations of subgraph counts there are results available for the $\mathscr{G}(n,p)$ model as well as for the configuration model, see \cite{BR17} and references therein. 

This paper generalises the results of \cite{cgr16} to derive compound Poisson approximations for subgraph counts in the $SBMM(n,\pi,f)$ model.  We consider both the case that the fixed graph $G$ is a simple graph and that it is itself a multigraph. For simple graphs, a Poisson approximation is valid when the fixed graph $G$ is strictly balanced, see for example Theorem 5.B in \cite{bhj92}. For the case of mulitgraphs we introduce a related notion which we call ``strictly pseudo-balanced'', see Definition \ref{pseudo} below. The existence of multiple edges can create clumps of subgraph counts. In such a situation a compound Poisson approximation is more appropriate than a Poisson approximation; see also  Remark \ref{trirem} for why this is the case. The compound Poisson approximations of the main results (Theorems \ref{thm2.1} and \ref{thm3.1}) are valid when the fixed graph $G$ is strictly balanced (or strictly pseudo-balanced) and the edge probabilities $\pi_{a,b,k}$ are not too large for all $a$, $b$ and $k$, and thus generalise Theorem 2.1 of \cite{cgr16} in a natural manner.  In 
certain parameter regimes, which includes the model of \cite{kn11}, not only a compound Poisson approximation but also a Poisson approximation may hold; see Section \ref{sec5}.  As far as we are aware, this is the first paper to give explicit bounds for a compound Poisson approximation of subgraph counts in random multigraph models. 


The rest of the paper is organised as follows. Section \ref{sec2} introduces the setup we use to prove our main results.  Section \ref{sec1.1} contains the notation and procedure we use to count subgraph copies in the SBMM, while Section \ref{sec2.3} recalls the relevant theory from the Stein-Chen method for compound Poisson approximation that will be used to derive the approximations.  Section \ref{sec2.4} concerns the calculation of the parameter in the limiting compound Poisson distribution. As an example, the parameter is worked out in the case that the fixed graph $G$ is the complete graph on $v$ vertices.  In Section \ref{sec3}, we use the Stein-Chen method to derive a compound Poisson approximation for the number of subgraphs in the SBMM which are isomorphic to some fixed simple graph from the class of strictly balanced graphs.   Remark \ref{trirem} illustrates with an example  that a Poisson approximation is in general not applicable.  It is noted in Example \ref{ex3.4} that Theorem \ref{thm2.1} is easily applicable to the model of \cite{kn11}.  Also, in Corollary \ref{cor88} and Example \ref{ex9}, we show how Theorem \ref{thm2.1} generalises to edge probabilities of the form $\pi_{a,b,i,j,k}$, which includes the degree-corrected stochastic block model of \cite{kn11} as a special case. Section \ref{sec4} is devoted to a generalisation of this problem in which the fixed graph can now itself have multiple edges.  We again derive a compound Poisson approximation, for which the strictly balanced graph condition is replaced by an appropriate generalisation for multigraphs. Section \ref{sec5} contains the generalisation of our results to the SBMM with self-loops. We obtain some simple sufficient conditions for which a Poisson approximation of subgraph counts holds in the SBMM (see Theorem \ref{thm5.2} and Remark \ref{rem5.4}).  Finally, Corollary \ref{cor5.5},  gives a Poisson approximation for subgraph counts in the Poisson stochastic block model of \cite{kn11}.  

\section{The setup}\label{sec2}

\subsection{Subgraph counts in the SBMM}\label{sec1.1}

\subsubsection{Simple graph $G$}\label{sec2.1}

Let us now introduce some notation that will be useful for counting copies of a fixed graph $G$ in the $SBMM(n,\pi,f)$.  For simplicity of exposition, we firstly consider the case that $G$ is a simple graph and then in Section \ref{sec2.2} we generalise to the case that $G$ is a multigraph.  Let $K_n$ be the complete graph with $n$ vertices and $\binom{n}{2}$ edges.  Let $G= (V(G), E(G)) \subset K_n$ be a fixed graph with $v(G)$ vertices and $e(G)$ edges; here $V(G)$ denotes its vertex set and $E(G)$ its edge set.  To avoid trivialities, we assume that $e(G)\geq 1$ and that $G$ has no isolated vertices. We shall be particularly interested in the case that $G$ is a member of the class of strictly balanced graphs, which is defined as follows (see for example \cite{bhj92}).  Let
\begin{equation} \label{dg}
d(G) = \frac{e(G)}{v(G)}.
\end{equation}
Then the graph $G$ is said to be \emph{strictly balanced} if $d(H)<d(G)$ for all subgraphs $H\subsetneq G$. 

Let $\Gamma$ be the set of all copies of $G$ in $K_n$, that is all subgraphs of $K_n$ that are isomorphic to $G$.  Note that $|\Gamma|=\binom{n}{v(G)}\rho(G)$, where $\binom{n}{v(G)}$ is the number of ways of choosing $v(G)$ vertices, and  
\begin{equation}\label{rho}\rho(G)=\frac{(v(G))!}{a(G)},
\end{equation}
where $a(G)$ is the number of elements in the automorphism group of $G$.

In a multigraph, more than one isomorphic copy of $G$ may be present for a given labelled graph $\alpha =(V(\alpha), E(\alpha)) \in\Gamma$.  To take this into account, for $\alpha \in \Gamma$ with  $E(\alpha) = \{ (\alpha_1, \alpha_2, \ldots, \alpha_{e(G)})  \}$ let $\Lambda_\alpha= \Lambda_\alpha(G)$ denote the set $\{(a_{\alpha_1},\ldots,a_{\alpha_{e(G)}})\in\{1,2,3,\ldots\}^{e(G)} \} = \mathbb{N}^{e(G)}$.  We view the $a_{\alpha_i}$ as edge labels in the following sense.  In the $SBMM(n,\pi,f)$ model, the number of edges between vertices $u$ and $v$ 
is determined according to the classes of these vertices and the measure $\pi$.  If $k\geq1$ edges are selected via the measure $\pi$, then these edges are given edge labels $1,\ldots,k$.  In this way, if vertices $u$ and $v$ are from classes $a$ and $b$, then edge number $1$ between $u$ and $v$ occurs with probability$\sum_{k=1}^\infty\pi_{a,b,k}$, edge number 2 between $u$ and $v$  occurs with probability $\sum_{k=2}^\infty\pi_{a,b,k}$, and so forth.

Now, let $\mathscr{G}=(\mathcal{V},\mathcal{E})$ be a random multigraph on $n$ edges. For $\alpha\in\Gamma$ and $a\in\Lambda_\alpha$,  let $X_{\alpha,a}(G)$ be the indicator random variable for the occurrence of $\alpha\in\Gamma$ which is isomorphic to the fixed graph $G$, using edges $a\in \Lambda_\alpha$. Here we write as shorthand ``using edges $a\in \Lambda_\alpha$'' to indicate that the occurrence is on edge labelled $a_{\alpha_1} $ of edge $\alpha_1$, on  edge labelled $a_{\alpha_2} $ of edge $\alpha_2$, and so on, until edge labelled $a_{\alpha_{e(G)}} $ of edge $\alpha_{e(G)}$.  Let $W(G)=W$ denote the total number of copies of $G$ in the random graph $\mathscr{G}$,
\begin{equation}\label{weqn}W(G)  = W=\sum_{\alpha\in\Gamma}\sum_{a\in\Lambda_\alpha}X_{\alpha,a} (G).
\end{equation}
In the following, the dependence of $W$ and $X_{\alpha,a} (G)$ on $G$ is usually suppressed to simplify notation.
Here, copies are counted as opposed to induced copies where not only all edges of the graph have to appear, but also no edge which is not in the graph is allowed to appear in the copy. For example, the complete graph $K_n$, $n\geq3$,  contains $(n-1)!/2$ copies, but no induced copy, of an $n$-cycle. 

As an illustration of this notation, count the number of isomorphic copies of the path on three vertices, denoted by $G$, in a graph $\mathscr{G}$ with vertex set $\{1,2,3\}$.  Suppose that $\mathscr{G}$ has a single edge between vertices 1 and 2, three edges between vertices 2 and 3, and no edge between vertices 1 and 3.  Then the set of possible copies of $G$ in $K_3$ is $\Gamma =\{ ( \{1,2,3\}, \{(1,2), (2,3)\} ), ( \{1,2,3\}, \{(1,3), (2,3)\} ), ( \{1,2,3\}, \{(1,2), (1,3)\} ) \} = \{\alpha^{(1)}, \alpha^{(2)}, \alpha^{(3)}\}$. In this example $X_{\alpha^{(1)},(1,1)}=X_{\alpha^{(1)},(1,2)}=X_{\alpha^{(1)},(1,3)}=1$ and all other indicators are equal to 0.  Thus there are three copies of $G$ in $\mathscr{G}$, and hence $W(G) = 3$.

In the $SBMM(n,\pi,f)$, the conditional occurrence probability of an isomorphic copy of the subgraph $G$ on $\alpha = ( \{ {i_1,\ldots,i_{v(G)} \} , \{ {\alpha_1},\ldots,\alpha_{e(G)}} \}  )\in\Gamma$ with edge multiplicities $(a_{\alpha_1}, \ldots, a_{\alpha_{e(G)}}) \in \mathbb{N}^{e(G)}$,  given the classes $c_{i_1}, \ldots, c_{i_{v(G)}}$  of the vertices $i_1,\ldots,i_{v(G)}$ forming $\alpha$ is
\begin{equation*}
\mathbb{P}(X_{\alpha,a}=1\,| c_{i_1}, \ldots, c_{i_{v(G)}} )=\prod_{1\leq u< v\leq v(G): (u,v) \in E(\alpha) }q_{c_u,c_v,a_{(u,v)}},
\end{equation*}  
where $q_{a,b,k}=\sum_{l=k}^\infty \pi_{a,b,l}$ is the probability of there being at least $k$ edges between vertices of classes $a$ and $b$.  Here $a_{(u,v)}$ denotes the edge label $a_{(u,v)}$ between vertices $u$ and $v$.  The occurrence probability of an isomorphic copy of $G$, using edges $a\in \Lambda_\alpha$ is then 
\begin{equation*}\label{mueqn0}
\mu_{\alpha,a}(G)=\mathbb{E} X_{\alpha,a}= \sum_{c_1,c_2,\ldots,c_{v(G)}=1}^{Q} f_{c_1}f_{c_2}
\cdots f_{c_{v(G)}} \prod_{1 \leq u<v \leq v(G): (u,v) \in E(\alpha) }q_{c_u,c_v,a_{(u,v)}},
\end{equation*}
and the expected number of isomorphic copies of $G$ at position $\alpha\in\Gamma$ is therefore
\begin{align}
\label{mugggl}\mu(G)&=\sum_{a\in\Lambda_\alpha}\mathbb{E} X_{\alpha,a}\\
&= \sum_{a\in\Lambda_\alpha}\sum_{c_1,c_2,\ldots,c_{v(G)}=1}^{Q} f_{c_1}f_{c_2}
\cdots f_{c_{v(G)}} \prod_{1 \leq u<v \leq v(G): (u,v) \in E(\alpha) }q_{c_u,c_v,a_{(u,v)}}\nonumber\\
&=\sum_{c_1,c_2,\ldots,c_{v(G)}=1}^{Q} f_{c_1}f_{c_2}
\cdots f_{c_{v(G)}} \prod_{1 \leq u<v \leq v(G): (u,v) \in E(\alpha) }\bigg(\sum_{a_{(u,v)}=1}^\infty q_{c_u,c_v,a_{(u,v)}}\bigg)\nonumber\\
\label{mueqn}&=\sum_{c_1,c_2,\ldots,c_{v(G)}=1}^{Q} f_{c_1}f_{c_2}
\cdots f_{c_{v(G)}} \prod_{1 \leq u<v \leq v(G): (u,v) \in E(\alpha) }\mathbb{E}Y_{c_u,c_v},
\end{align}
where the random variable $Y_{a,b}$ has probability mass function 
\begin{equation}\label{yab}\mathbb{P}(Y_{a,b}=k)=\pi_{a,b,k}, \quad k=0,1,2,\ldots.
\end{equation}
We write $\mu(G)$ (with no $\alpha$ subscript) in (\ref{mugggl}), because $\mu(G)$ is constant over $\alpha\in\Gamma$, for a fixed
$\Gamma$, since all the graphs in $\Gamma$ are copies of the same graph.
From \eqref{weqn} and \eqref{yab} it follows that  
\begin{equation}\label{lambdaeqn}\mathbb{E}W=\binom{n}{v(G)} \rho(G) \mu(G).
\end{equation}

\subsubsection{Multigraph $G$}\label{sec2.2}

Let us now generalise the setup for simple fixed graphs $G$ to fixed multigraphs $G$.  Assume that the graph $G$ has $v(G)$ vertices and denote the maximum number of edges between any two nodes by the finite number $t(G)$.  As before, the vertex and edge sets are denoted by $V(G)$ and $E(G)$ respectively.  Here $E(G)$ differs from the simple graph case, because there may now be more than one edge between a pair of vertices.  Multiple edges appear in $E(G)$ according to their multiplicities in $G$.  Again, assume that there are no isolated vertices and the total number of edges $e(G)$ is  greater or equal to 1.  The notion of strictly balanced graphs has the following natural generalisation to multigraphs.  

\begin{definition}\label{pseudo}Let $f(G)$ be the number of pairs of vertices in $G$ with at least one edge between them and set
\begin{equation}\label{partial}
\partial(G) = \frac{f(G)}{v(G)}.
\end{equation}
Then we say that $G$ is a member  of the class of \emph{strictly pseudo-balanced multigraphs} if $\partial(H)<\partial(G)$ for all subgraphs $H\subsetneq G$. 
\end{definition}

Note that the condition \eqref{partial} is equivalent to requiring the subgraph obtained by reducing the multiplicity of all edges to 1 to be strictly balanced.

The procedure for counting copies of $G$ is similar to the simple graph case, but with some important generalisations. The quantity $\rho(G)$ is defined as in (\ref{rho}).  Though it is worth noting, for example, that if $G_1$ is a triangle then $\rho(G_1)=1$, whereas for the multigraph $G_2$ which is a triangle with an additional edge added between a pair of vertices then $\rho(G_2)=3$.  Note that there is a discrepancy here even though the cardinality of the automorphism groups would be equal if the multiplicity of the multi-edge in $G_2$ was set to 1.  Now, $\Gamma$ is the set of all isomorphic copies of $G$ in the complete $t(G)$-multigraph on $n$ vertices, for which there are} $t(G)$ edges between all $\binom{n}{2}$ vertex pairs.  As before, $|\Gamma|=\rho(G)\binom{n}{v(G)}$.  Note, though that for counting copies of $G_1$ we have $|\Gamma|=\binom{n}{3}$, but for counting copies of $G_2$ the set $\Gamma$ has a larger cardinality: $|\Gamma|=3\binom{n}{3}$.  


We are also required to suitably generalise the set $\Lambda_\alpha$. To explain the generalisation, first suppose that $G$ consists of only one edge, with multiplicity $k$. Then $\alpha \in \Gamma$  consists of $k$ edges
between a particular pair of vertices.  If the observed count is $j\ge k$, then we consider all possible ${j \choose k}$ possibilities to pick $k$ edges out of the $j$ edges.  In this case, to reflect the choices for all possible $j \ge k$ we set $\Lambda_\alpha=:\Lambda_{\alpha}^k= \{ (a_1, \ldots, a_k) \in \mathbb{N}^k: 1 \le a_1 < a_2 < \cdots <a_k \}$, where the superscript emphasises that the edge in $G$ has multiplicity $k$.  If $G$ has $e(G)$ multi-edges instead of just one multi-edge, with edge multiplicities $k_1,\ldots,k_{e(G)}$, and if $E(\alpha) = \{\alpha_1, \ldots, \alpha_{e(G)}\}$,  then we let
$$\Lambda_\alpha = \Lambda_{\alpha_1}^{k_1} \otimes \Lambda_{\alpha_2}^{k_2} \otimes \cdots \otimes \Lambda_{\alpha_e(G)}^{k_{e(G)}} = \bigotimes_{i=1}^{e(G)} \Lambda_{\alpha_i}^{k_i}. 
$$

  
      
        As an example, consider the multigraph $\mathcal{G}$ on three vertices in which there are two edges between each vertex pair.  Then there are eight copies of the triangle graph $G_1$, but twelve copies of the multigraph $G_2$.  Since $\rho(G_2)$=3, there are three isomorphic copies of $G_2$ in $\Gamma$, which we denote by $\alpha_1,\alpha_2,\alpha_3$.  The indicator random variables are $X_{\alpha_i,(j)\otimes(k)\otimes(1,2)}=1$, $i=1,2,3$, $j=1,2$ and $k=1,2$.  The total number of copies of the multigraph $G$ is then defined as in (\ref{weqn}).

Recall that in Section \ref{sec2.1} we defined the random variable $Y_{a,b}$ to have probability mass function $\mathbb{P}(Y_{a,b}=k)=\pi_{a,b,k}$, $k=0,1,2,\ldots$.  Let $i_{(u,v)}\geq1$ be the number of edges between vertices $u$ and $v$ in the fixed multigraph $G$.  Then, by generalising the argument used to obtain (\ref{mueqn}), we have that
\begin{align*}
\mu(G)=\sum_{a\in\Lambda_\alpha}\mathbb{E} X_{\alpha,a}&=\sum_{c_1,c_2,\ldots,c_{v(G)}=1}^{Q} f_{c_1}f_{c_2}
\cdots f_{c_{v(G)}}\times \\
&\quad\times \prod_{1 \leq u<v \leq v(G): (u,v) \in E(G) }\bigg(\sum_{a_{(u,v)}=i_{(u,v)}}^\infty \binom{a_{(u,v)}-1}{i_{(u,v)}-1}\mathbb{P}(Y_{c_u,c_v}\geq a_{(u,v)})\bigg).
\end{align*}
Here, to avoid double counting, we assume the edges in the SBMM appear in some order, so picking the $a_{(u,v)}$-th as well as $i_{(u,v)}-1$ smaller edges leads to the binomial coefficient in the above sum.
Now, 
\begin{align*}\sum_{a=i}^\infty \binom{a-1}{i-1}\mathbb{P}(Y_{c_u,c_v}\geq a)&=\sum_{a=i}^\infty \binom{a-1}{i-1}\sum_{\ell=a}^\infty \mathbb{P}(Y_{c_u,c_v}= \ell)\\
&=\sum_{\ell=i}^\infty\sum_{a=i}^\ell \binom{a-1}{i-1}\mathbb{P}(Y_{c_u,c_v}= \ell)\\
&=\sum_{\ell=i}^\infty \binom{\ell}{i} \mathbb{P}(Y_{c_u,c_v}= \ell)=\mathbb{E}\bigg[\binom{Y_{c_u,c_v}}{i}\bigg],
\end{align*}
and so 
\begin{equation}\label{mueqn2}\mu(G)=\sum_{a\in\Lambda_\alpha}\mathbb{E} X_{\alpha,a}=\sum_{c_1,c_2,\ldots,c_{v(G)}=1}^{Q} f_{c_1}f_{c_2}
\cdots f_{c_{v(G)}} \prod_{1 \leq u<v \leq v(G): (u,v) \in E(G) }\mathbb{E}\bigg[\binom{Y_{c_u,c_v}}{i_{(u,v)}}\bigg].
\end{equation}
(Note that when $G$ is a simple graph we have $i_{(u,v)}=1$, and so (\ref{mueqn2}) reduces to (\ref{mueqn}).)  The computation of $\mathbb{E}W$ is then as before, with (\ref{mueqn2}) now replacing (\ref{mueqn}). {Lastly, we note that in the case of one class ($Q=1$) formula (\ref{mueqn2}) simplifies.  Suppose that $G$ has $e_i(G)$ pairs of vertices which have $i$ edges between them, with $e_{t(G)}(G)>0$ and $e_i(G)=0$ for all $i>t(G)$.  Then
\begin{equation*}\mu(G)=\prod_{i=1}^{t(G)}\bigg(\mathbb{E}\bigg[\binom{Y_{1,1}}{i}\bigg]\bigg)^{e_i(G)}.
\end{equation*}}

\begin{remark}\label{trirem}In \cite{cgr16}, a Poisson approximation was obtained for the distribution of the number of isomorphic copies of a strictly balanced graph in the SBM.  The possibility of multiple edges in the SBMM model means that in general a Poisson approximation will not be valid.  This can be seen from the following simple example.  Consider the $SBMM(n,f,\pi)$ with edge probability distribution $\pi_{a,b,0}=1-p$, $\pi_{a,b,2}=p$ for all $a,b$, and $\pi_{a,b,k}=0$ for $k=1$ and $k\geq3$ and all $a,b$.  This model can be seen as the classical Erd\H{o}s-R\'{e}nyi random graph model, with two edges occurring (rather than just one) between two vertices independently and uniformly with probability $p$.  Triangles (a member of class of strictly balanced graphs) occur in clumps of 8 in this model, and the distribution of the total number of triangles is well-approximated by a $8Po(\nu)$ distribution, where $\nu=\binom{n}{3}p^3$ is the expected number of triangles (this can be easily deduced from Theorem 5.B of \cite{bhj92}).  Clearly, $8Po(\nu)$ is not Poisson distributed.  Instead it is a special case of a compound Poisson distribution. A compound Poisson distribution is the distribution of the random sum $\sum_{n=1}^N X_i$, where the ``number of clumps'' $N$ follows a Poisson distribution and the ``clump sizes'' $X_i, i=1, 2, \ldots $ are i.i.d$.$ (and can be constant, for example all equal to 8) and independent of $N$.  Hence in this paper we focus on compound Poisson approximations. 
\end{remark}

\subsection{The Stein-Chen method for compound Poisson approximation}\label{sec2.3}

In this paper, we use the Stein-Chen method for compound Poisson approximation to assess the distributional distance between $\mathcal{L}(W)$ and the compound Poisson  $CP(\boldsymbol{\lambda})$ distribution when the fixed graph $G$ is a member of the class of strictly balanced graphs or strictly pseudo-balanced graphs.   In this section, we consider the case that $G$ is a simple graph or multigraph in one framework.  This distributional distance is measured using the total variation distance, which for non-negative, integer-valued random variables $U$ and $V$ is given by
\begin{equation*}d_{TV}(\mathcal{L}(U),\mathcal{L}(V))=\sup_{A\subseteq\mathbb{Z}^+}|\mathbb{P}(U\in A)-\mathbb{P}(V\in A)|.
\end{equation*} 
We begin by recalling the compound Poisson distribution, and then provide the relevant details of the Stein-Chen method for compound Poisson approximation.

The compound Poisson distributions $CP(\boldsymbol{\lambda})$ is a family of distributions with an infinite-dimensional parameter $\boldsymbol{\lambda}=(\lambda_1,\lambda_2,\ldots)$ such that $\lambda_i\geq0$ for all $i=1,2,\ldots$. We suppose that $\lambda:=\sum_{i=1}^\infty\lambda_i<\infty$. Then a random variable having the $CP(\boldsymbol{\lambda})$ distribution can be constructed as follows. Let $X_1,X_2,\ldots,$ be a sequence of independent and identically distributed random variables with probability mass function $\mathbb{P}(X_1=i)=\lambda_i/\lambda$. Also, let $Z$ be a Poisson distributed random variable independent of the $\{X_j\}$, with mean $\mathbb{E}Z=\lambda$. With these definitions of $\{X_j\}$ and $Z$, the random sum $\sum_{j=1}^Z X_j$ has the $CP(\boldsymbol{\lambda})$ distribution. The $CP(\boldsymbol{\lambda})$ distribution can be expressed in a second way. If $Z_i$ are independent Poisson variables with means $\mathbb{E}Z_i=\lambda_i$, then $\sum_{i=1}^\infty iZ_i$ is $CP(\boldsymbol{\lambda})$ distributed. In general the probability mass function of a compound Poisson distribution is not available in closed form.

The Stein-Chen method for Poisson approximation is introduced by \cite{chen 0}, and is extended to compound Poisson approximation by \cite{r93,r94}.  A comprehensive account of the application of the Stein-Chen method for Poisson approximation in random graph theory is given in \cite{bhj92}, and the method is used to derive compound Poisson approximation of subgraph counts in Erd\H{o}s-R\'{e}nyi random graphs in \cite{stark}.  Here, we present the compound Poisson framework that is given in \cite{r93,r94} and \cite{stark}.

The random variable of interest is assumed to be of the form $W=\sum_{\alpha\in\Gamma}\sum_{a\in\Lambda_\alpha}X_{\alpha,a}$.  The set $\Gamma\setminus\{\alpha\}$ is partitioned into three index classes for each $\alpha\in\Gamma$.  These index classes are denoted by $\Gamma_\alpha^s$, $\Gamma_\alpha^b$, and $\Gamma_\alpha^w$, so that $\Gamma\setminus\{\alpha\}=\Gamma_\alpha^s\cup \Gamma_\alpha^b\cup \Gamma_\alpha^w$.  The intuition behind this partitioning is as follows. Think of $W$ as a sum of random elements which are dependent, but the dependence is fairly local, with only a weak long-range dependence, if any at all. In this situation one tries to partition the index set such that the 
 set $\Gamma_\alpha^s$ roughly consists of indices $\beta$ such that the indicators $\{X_{\beta,b}\,:\,\beta\in \Gamma_\alpha^s\}$ are strongly dependent on $X_{\alpha,a}$ (for any $a\in\Lambda_a$ and $b\in\Lambda_\beta)$; the set $\Gamma_\alpha^w$ roughly consists of indices $\beta$ such that the indicators $\{X_{\beta,b}\,:\,\beta\in\Gamma_\alpha^w\}$ are very weakly dependent on $X_{\alpha,a}$; and the set $\Gamma_\alpha^b$ (boundary indices) roughly consists of indices $\beta$ such that the indicators $\{X_{\beta,b}\,:\,\beta\in\Gamma_\alpha^b\}$ are perhaps more than weakly dependent on $X_{\alpha,a}$. A natural choice for the sets $\Gamma_\alpha^s$, $\Gamma_\alpha^w$ and $\Gamma_\alpha^b$ in our situation is 
\begin{align*}\Gamma_\alpha^s&=\{\beta\in\Gamma\setminus\{\alpha\}\,:\,V(\alpha)=V(\beta)\}, \\
\Gamma_\alpha^b&=\{\beta\in\Gamma\,:\,|V(\alpha)\cap V(\beta)|\in\{1,2,\ldots,v(G)-1\}\}, \\
\Gamma_\alpha^w&=\{\beta\in\Gamma\,:\,|V(\alpha)\cap V(\beta)|=0\}.
\end{align*}
For example, suppose  that $G$ is the path on three vertices and we consider the complete graph $K_6$ on 6 vertices.  Ignoring redundant copies, on $K_6$ we have the  ${6 \choose 3} \times 3 = 60$ copies of $G$ in $K_6$, such as  $\{(1,2),(2,3)\}$,
$\{(1,3),(2,3)\}$ and $\{(1,2),(1,3)\}$. Suppose $\alpha=\{ \{1,2,3\},\{(1,2),(2,3)\}\}$.  Then with our definition, $\Gamma_\alpha^s=\{ \{(1,3),(2,3)\}, \{(1,2),(1,3)\}\}$, while 
$\Gamma_\alpha^w = \{ \{( 4,5) , ( 4, 6 )\} , \{(4,5) , (5,6)\},  \{( 4,6) , ( 5,6)\}\}$, and $\Gamma_\alpha^b = \Gamma \setminus ( \Gamma_ \alpha^{s} \cup \Gamma_\alpha^w \cup \{ \alpha\})$.

The set $\{\beta\in\Gamma_\alpha^s, \,b\in\Lambda_\beta\,:\,X_{\beta,b}=1\}$ is the ``clump" about $\alpha$. The size of the clump at $\alpha$ (which includes $\alpha$ itself) is denoted by $Z_\alpha$ and given by the equation
\begin{equation}\label{zeqn}Z_\alpha=\sum_{a\in\Lambda_\alpha}X_{\alpha,a}+\sum_{\beta\in\Gamma_\alpha^s}\sum_{b\in\Lambda_\beta}X_{\beta,b}.
\end{equation}
With our choice of index sets, for multigraph counts  $\{X_{\beta,b}\,:\,\beta\in\Gamma_\alpha^w\}$ is independent of $X_{\alpha,a}$.

The parameter $\boldsymbol{\lambda}=(\lambda_1,\lambda_2,\ldots)$ of the approximating compound Poisson distribution is given by
\begin{equation}\label{lamlam}\lambda_i=\frac{1}{i}\sum_{\alpha\in\Gamma}\sum_{a\in\Lambda_a}\mathbb{E}[X_{\alpha,a} I(Z_\alpha=i)], \quad i=1,2,\ldots.
\end{equation}
(This is slight variant of the expression given in \cite{r93,r94}, which takes into account the presence of multiple edges.)  The $\lambda_i$  represent the expected number of ``clumps'' of size $i$. They are calculated explicitly for many examples in \cite{r93}, although none of these formulas apply to random graphs.  Further details on the computation of the $\lambda_i$ for subgraph counts in the SBMM are given in Section \ref{sec2.4}.

Before stating the Stein-Chen bound for compound Poisson approximation that will be used in this paper, we introduce some further notation.  Define
\begin{align*}\eta_\alpha=\sum_{\beta\in\Gamma\setminus\Gamma_\alpha^w}\sum_{b\in\Lambda_\beta}X_{\beta,b}, \quad V_\alpha=\sum_{\beta\in\Gamma_\alpha^b}\sum_{b\in\Lambda_\beta}X_{\beta,b},
\end{align*}
and
\begin{equation*}\epsilon=\sum_{\alpha\in\Gamma}\sum_{a\in\Lambda_\alpha}\big\{\mathbb{E}X_{\alpha,a}\mathbb{E}\eta_\alpha+\mathbb{E}[X_{\alpha,a}V_\alpha]\big\}.
\end{equation*}
Now we can state the Stein-Chen bound for Possion approximation (adapted from \cite{r94}, Theorem 2) that will be used in this paper:
\begin{equation}\label{dtv}d_{TV}(\mathcal{L}(W),CP(\boldsymbol{\lambda})) \leq c(\boldsymbol{\lambda})\epsilon.
\end{equation}
Here
\begin{align*}c(\boldsymbol{\lambda})&=\sup_{A\subset \mathbb{Z}^+}\sup_{j\geq0}|g_{\boldsymbol{\lambda},A}(j+1)-g_{\boldsymbol{\lambda},A}(j)|,
\end{align*}
where $\mathbb{Z}^+=\{0,1,2,\ldots\}$ and $g_{\boldsymbol{\lambda},A}(j)$ is the unique bounded solution of the so-called Stein equation
\begin{equation*}\sum_{i=1}^\infty i\lambda_i g(j+i)-jg(j)=I(j\in A)-\mathbb{P}(Y\in A),
\end{equation*}
for $Y\sim CP(\boldsymbol{\lambda})$.  It should be noted that our notation for $\eta_\alpha$, which involves summing over $\beta\in\Gamma\setminus\Gamma_\alpha^w$ and thus includes $\alpha$, is different from the analogous notation in \cite{r94}, but is convenient in the subgraph counts applications considered in this paper.

There exist bounds in the literature for the constant $c(\boldsymbol{\lambda})$.  It is shown in Theorem 4 of \cite{bcl92} that 
\begin{equation}\label{c2eqn}c(\boldsymbol{\lambda})\leq \mathrm{e}^{\lambda} \min\{1,\lambda_1^{-1}\},
\end{equation}
for all $\boldsymbol{\lambda}$ with $\lambda = \sum_{i=1}^\infty\lambda_i<\infty$.  In general, the dependence on $\boldsymbol{\lambda}$ in such  a bound on  $c(\boldsymbol{\lambda})$ cannot be improved, and the bound \eqref{c2eqn} is  most useful when $\lambda$ is small. However, in certain settings, the estimate (\ref{c2eqn}) can be improved; see \cite{bcl92, bc01, bu98, bx00, d17}.

\begin{remark}
Theorem 2 of \cite{r94} contains an additional term which depends on 
\begin{equation*}\phi_{\alpha,a, i}=\mathbb{E}|\mathbb{E}[X_{\alpha,a} I(Z_\alpha=i)\,|\,(X_{\beta,b}\,:\,\beta\in\Gamma_\alpha^w,b\in\Lambda_\beta)]-\mathbb{E}[X_{\alpha,a} I(Z_\alpha=i)]|.
\end{equation*}
By independence of vertex-disjoint edges, and the fact that the edges in $\Gamma_{\alpha}^s$ are vertex-disjoint from those in $\Gamma_{\alpha}^w$, we have    that $Z_\alpha$ is independent of the random variables $\{X_{\beta,b}\,:\,\beta\in\Gamma_\alpha^w,b\in\Lambda_\beta\}
$.  Therefore $\phi_{\alpha,a, i}=0$ for all values of $\alpha$, $a$ and $i$.  This means that this addtional term vanishes from the bound of Theorem 2 of \cite{r94}. 
\end{remark} 

In summary, bounding the total variation distance between the distribution of the subgraph counts in the SBMM and the $CP(\boldsymbol{\lambda})$ distribution reduces to bounding the quantity $\epsilon$.  We shall derive our compound Poisson approximations for subgraph counts (Theorems \ref{thm2.1} and \ref{thm3.1}) using this approach.

\subsection{Calculating the parameter of the limiting compound Poisson distribution}\label{sec2.4}

The majority of the approximation theorems derived in this paper will involve a compound Poisson approximation distribution of the subgraph counts in the SBMM, with the exception being the Poisson approximations of Section \ref{possec}.  In the case of Poisson approximation, the parameter $\nu:=\mathbb{E}W$ of the limiting $Po(\nu)$ distribution is easily calculated using (\ref{lambdaeqn}).  For example in the Erd\H{o}s-R\'{e}nyi $\mathscr{G}(n,p)$ model, which is a special case of the SBMM, if the fixed graph $G$ is a triangle, then $v(G)=3$, $\rho(G)=1$ and $\mu(G)=p^3$, and so $\nu=\binom{n}{3}p^3$.  In Section \ref{possec}, it shall be understood that $\nu=\mathbb{E}W$ is calculated via (\ref{lambdaeqn}).  

When a compound Poisson approximation is sought, the parameter $\boldsymbol{\lambda}=(\lambda_1,\lambda_2,\ldots)$ is given by (\ref{lamlam}).  Given any fixed $\alpha^*\in\Gamma$, by symmetry we can write (\ref{lamlam}) as
\begin{align}\lambda_i&=\frac{1}{i}\sum_{\alpha\in\Gamma}\sum_{a\in\Lambda_\alpha}\mathbb{E}X_{\alpha,a}\mathbb{P}(Z_\alpha=i\,|\,X_{\alpha,a}=1)\nonumber \\
\label{conprob}&=\frac{1}{i}|\Gamma|\sum_{a\in\Lambda_{\alpha^*}}\mathbb{E}X_{\alpha^*,a}\mathbb{P}(Z_{\alpha^*}=i\,|\,X_{\alpha^*,a}=1) \\
\label{conprob2} &=\frac{1}{i}|\Gamma|\sum_{a\in\Lambda_{\alpha^*}}\mathbb{P}(Z_{\alpha^*}=i\, ,\,X_{\alpha^*,a}=1).
\end{align}
In the simple graph case ($\pi_{a,b,i}=0$ for $i\geq2$ and all $a,b$), a further slight simplification of expression (\ref{conprob}) is possible; see expression (20) of \cite{stark}. 

For fixed graphs $G$ in the SBMM, the conditional probability in (\ref{conprob}) or the  joint probability in (\ref{conprob2}) can be worked out on a case-by-case basis with the formulation (\ref{conprob}) more suitable in a coupling context and (\ref{conprob2}) perhaps more suitable when local dependence is employed.  The conditional probability in (\ref{conprob}) is worked out by \cite{bu98} in the $\mathscr{G}(n,p)$ setting for the Whisker graph (the graph on four vertices comprised of a vertex that is connected by an edge to one of the vertices of the complete graph on three vertices - it should be noted, though, that the Whisker graph is not strictly balanced and \cite{bu98} used a different choice for the sets $\Gamma_\alpha^s$, $\Gamma_\alpha^b$, $\Gamma_\alpha^w$.)    However, as noted by \cite{stark}, it is not always possible to calculate the conditional probability exactly. 

Let us now illustrate the computation of the $\lambda_i$ via expression (\ref{conprob2}) for subgraphs in the SBMM.  For purposes of exposition, we consider the $SBMM(n,\pi,f)$ model with one class, that is $f_1=1$ and $f_i=0$ for $i\geq2$; the extension to multiple classes is not difficult, but involves more notation.  Let $p_i=\pi_{1,1,i}$.

\begin{example}Let us calculate $\boldsymbol{\lambda}$ for the case that $G$ is the complete graph on $v$ vertices with $t$ edges between each vertex pair (a strictly pseudo-balanced graph). We have $|\Gamma|=\binom{n}{v}$, and for $\alpha^*\in\Gamma$, we have
\begin{align*}\Lambda_{\alpha^*}=\{&(a_{1,1},a_{2,1},\ldots,a_{t,1})\otimes\cdots\otimes(a_{1,\binom{v}{2}},a_{2,\binom{v}{2}},\ldots,a_{t,\binom{v}{2}})\in\mathbb{N}^{t\binom{v}{2}}\, :\,  \\
& \text{$1\leq a_{1,1}<a_{2,1}<\ldots< a_{t,1},\ldots, 1\leq a_{1,\binom{v}{2}}<a_{2,\binom{v}{2}}<\ldots< a_{t,\binom{v}{2}}$}\}.
\end{align*}
For $\alpha^* \in \Gamma$ we denote $a \in \Lambda_{\alpha^*}$ by $a=(a_{1,1},\ldots,a_{t,\binom{v}{2}})$. Also, $\Gamma_{\alpha^*}^s=\emptyset$ and so $Z_{\alpha^*}=\sum_{a\in\Lambda_{\alpha^*}}X_{\alpha^*,a}$.  As $G$ is a complete graph, clump sizes take the form of products of binomial coefficients only, with one binomial coefficient per multi-edge.  The joint probability is now given by
\begin{align*}&\mathbb{P}(Z_{\alpha^*}=i\, ,\,X_{\alpha^*,a}=1)=\sum_{x_1,\ldots,x_{\binom{v}{2}}\in B_{a,v,t,i}}p_{x_1}\cdots p_{x_{\binom{v}{2}}}\\
&\quad\quad\quad\text{for all $a=(a_{1,1},\ldots,a_{t,1},\ldots,a_{1,\binom{v}{2}},\ldots, a_{t,\binom{v}{2}})\in A_{v,t,i}$},
\end{align*}
and is 0 otherwise.  Here
\begin{align*}
A_{v,t,i}&=\Big\{a_{1,1},\ldots,a_{t,\binom{v}{2}}\, :\, 1\leq a_{1,1}<\ldots< a_{t,1},\ldots, 1\leq a_{1,\binom{v}{2}}<\ldots< a_{t,\binom{v}{2}} \\
& \quad\quad \text{and } 1 \leq \prod_{\ell=1}^{\binom{v}{2}} \binom{a_{t,\ell}}{t}\leq i \Big\}, \\
B_{a,v,t,i}&= \Big\{x_1,\ldots,x_{\binom{v}{2}}\, :\, \text{$x_1\geq a_{t,1},\ldots,x_{\binom{v}{2}}\geq a_{t,\binom{v}{2}}$ and $\prod_{\ell=1}
^{\binom{v}{2}} \binom{x_\ell}{t}= i$}\Big\}.
\end{align*}
Substituting into (\ref{conprob2}) then gives
\begin{equation}\label{lamex}\lambda_{i}=\frac{1}{i}\binom{n}{v}\sum_{a_{1,1},\ldots,a_{t,\binom{v}{2}}\in A_{v,t,i}}\sum_{x_1,\ldots,x_{\binom{v}{2}}\in B_{a,v,t,i}}p_{x_1}\cdots p_{x_{\binom{v}{2}}}, \quad i\geq 1.
\end{equation}

Note that in the case $t=1$ and  $p_i=0$ for $i\geq2$ the SBMM reduces to the $\mathscr{G}(n,p_1)$ model and we have $\lambda_1=\binom{n}{v}p_1^{\binom{v}{2}}$ and $\lambda_i=0$ for $i\geq2$.  Thus, the limit distribution is simply the Poisson distribution $Po(\binom{n}{v}p_1^{\binom{v}{2}})$, a result which is well known (see Theorem 5.B of \cite{bhj92}). 

For the edge graph ($v=2$), formula (\ref{lamex}) takes a particularly simple form:
\begin{align*}\lambda_{\binom{i}{t}}=\frac{1}{\binom{i}{t}}\binom{n}{2}\sum_{a_{1,1},\ldots,a_{t,1}\in A_{2,t,\binom{i}{t}}}p_i=\frac{1}{\binom{i}{t}}\binom{n}{2}p_i\sum_{1\leq a_{1,1}<a_{2,1}<\ldots<a_{t,1}\leq i}1=\binom{n}{2}p_i, \quad i\geq t,
\end{align*}
and $\lambda_k=0$ for all other $k$.

Finally, we note that $\boldsymbol{\lambda}$ for the edge graph but now with $Q$ classes is easily seen to be
\begin{equation*}\lambda_{\binom{i}{t}}=\binom{n}{2}\sum_{c_1,c_2=1}^Qf_{c_1}f_{c_2}\pi_{c_1,c_2,i}, \quad i\geq t,
\end{equation*}
with $\lambda_k=0$ for all other $k$. Formula (\ref{lamex}) generalises to $Q$ classes similarly.
\end{example}

\section{Compound Poisson approximation in the SBMM: the case of simple fixed graphs}\label{sec3}
In this section, we obtain a compound Poisson approximation for the number of subgraphs in the SBMM which are isomorphic to a fixed graph from the class of strictly balanced graphs.  Before stating this result, we introduce some notation.  Let
\begin{equation}\label{alpha}
\alpha(G) = \min_H \frac{e(G)-e(H)}{v(G)-v(H)}
\end{equation}
and, with \eqref{dg},
\begin{equation}\label{gamma} 
\gamma(G) =\min_H \{ d(G)v(H)-e(H) \} = \min_H [v(H) \{ d(G)-d(H) \} ],
\end{equation}
where the minima are taken over all non-empty proper subgraphs $H \subsetneq G$. In interpreting the following theorem (see, for example, Remark \ref{rem1}), it is useful to note that if $\gamma(G)>0$ or $\alpha(G)>d(G)$ then the graph $G$ is strictly balanced; see \cite{bhj92}.  Finally, we denote
\begin{equation}\label{pi**}\mu_1^*=\max_{a,b}\mathbb{E}Y_{a,b}
\end{equation}
where $Y_{a,b}$ is given in \eqref{yab}.
With this notation, we can state our theorem.

\begin{theorem}\label{thm2.1} Suppose that $G$ is a strictly balanced graph.  Then, with the  notation \eqref{rho}, \eqref{mueqn}, \eqref{lamlam}, \eqref{alpha}, \eqref{gamma} and \eqref{pi**},
\begin{align}\label{ermg1111thm}d_{TV}(\mathcal{L}(W),CP(\boldsymbol{\lambda}))  \leq M_{n,\pi,f}(G)&:=  \frac{c(\boldsymbol{\lambda})\rho(G)^2}{v(G)!}n^{v(G)}(\mu_1^*)^{e(G)} \bigg\{ \frac{v(G)^2}{v(G)!} n^{v(G)-1}   (\mu_1^*)^{e(G)} 
\nonumber \\
 &\quad+ \sum_{i=1}^{v(G)-1} \binom{v(G)}{i}  \frac{n^{v(G)-i}(\mu_1^*)^{\kappa(G,i)}}{(v(G)-i)!} \bigg\},  
\end{align}
where 
\begin{equation} \label{kappa} 
\kappa(G,i)=\max\{e(G)-id(G)+\gamma(G),(v(G)-i)\alpha(G)\}.
\end{equation} 
A bound for $c(\boldsymbol{\lambda})$ is given by inequality (\ref{c2eqn}).
\end{theorem}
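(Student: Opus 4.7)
The plan is to apply the Stein--Chen bound (\ref{dtv}), which reduces the task to upper-bounding $\epsilon = \sum_{\alpha\in\Gamma}\sum_{a\in\Lambda_\alpha}\{\mathbb{E}X_{\alpha,a}\mathbb{E}\eta_\alpha + \mathbb{E}[X_{\alpha,a}V_\alpha]\}$. I would split $\epsilon = \epsilon_1 + \epsilon_2$ along the two summands and arrange the argument so that $\epsilon_1$ yields the first term in the braces of $M_{n,\pi,f}(G)$ and $\epsilon_2$ yields the sum over $i\in\{1,\ldots,v(G)-1\}$.

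For $\epsilon_1$, vertex symmetry of $\Gamma$ gives $\sum_{a\in\Lambda_\alpha}\mathbb{E}X_{\alpha,a} = \mu(G)$ constant in $\alpha$, so $\epsilon_1 = |\Gamma|\,\mu(G)\,\mathbb{E}\eta_{\alpha^*}$ for any fixed $\alpha^*\in\Gamma$. From (\ref{mueqn}), together with $\mathbb{E}Y_{a,b}\leq\mu_1^*$ and $\sum_c f_c = 1$, one has $\mu(G)\leq(\mu_1^*)^{e(G)}$. I would then write $\mathbb{E}\eta_{\alpha^*} = \mu(G)\,|\Gamma\setminus\Gamma_{\alpha^*}^w|$ and use the union bound $|\Gamma\setminus\Gamma_{\alpha^*}^w|\leq v(G)\binom{n-1}{v(G)-1}\rho(G)\leq v(G)^2 n^{v(G)-1}\rho(G)/v(G)!$, obtained by summing over each vertex $u\in V(\alpha^*)$ the number of copies of $G$ containing $u$. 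Combining with $|\Gamma|\leq n^{v(G)}\rho(G)/v(G)!$ then yields the first term of $M_{n,\pi,f}(G)$ after multiplying by the Stein factor $c(\boldsymbol{\lambda})$.

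For $\epsilon_2$, I would partition $\Gamma_\alpha^b$ by the overlap size $i = |V(\alpha)\cap V(\beta)|\in\{1,\ldots,v(G)-1\}$ and use the count $\binom{v(G)}{i}\binom{n-v(G)}{v(G)-i}\rho(G)\leq\binom{v(G)}{i}n^{v(G)-i}\rho(G)/(v(G)-i)!$. For a given $\beta$ with overlap $i$, let $H\subseteq G$ be the intersection subgraph, with $v(H)=i$ vertices and $e(H)=e_H$ shared edges. Conditioning on the vertex classes and exploiting independence of edges on disjoint vertex pairs factorises $\sum_{a,b}\mathbb{E}[X_{\alpha,a}X_{\beta,b}]$ as a product over $E(\alpha)\cup E(\beta)$, producing a bound of order $(\mu_1^*)^{2e(G)-e_H}$ in the sparse regime. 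The strictly balanced hypothesis enters via (\ref{alpha}) and (\ref{gamma}): for $H\subsetneq G$ with $v(H)=i$, $\gamma(G)\leq id(G)-e_H$ and $(v(G)-i)\alpha(G)\leq e(G)-e_H$, so $e(G)-e_H\geq\kappa(G,i)$, and hence $(\mu_1^*)^{2e(G)-e_H}\leq (\mu_1^*)^{e(G)+\kappa(G,i)}$. Summing over $i$ and multiplying by $|\Gamma|\,\mu(G)$ then gives the second term.

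The main technical obstacle is the joint expectation step in $\epsilon_2$: shared edges contribute factors of $\mathbb{E}Y_{a,b}^2$ rather than $(\mathbb{E}Y_{a,b})^2$ once $a$ and $b$ are summed out, so converting the factorised expression into pure powers of $\mu_1^*$ requires the sparse regime together with both strict-balance inequalities above (one from $\gamma(G)$, the other from $\alpha(G)$) to absorb the excess. Combining the two estimates and inserting them into (\ref{dtv}) with $c(\boldsymbol{\lambda})$ bounded via (\ref{c2eqn}) delivers (\ref{ermg1111thm}).
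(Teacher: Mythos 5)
Your proposal follows the paper's proof essentially step for step: the same splitting of $\epsilon$ into the $\sum_{\alpha,a}\mathbb{E}X_{\alpha,a}\mathbb{E}\eta_\alpha$ and $\sum_{\alpha,a}\mathbb{E}[X_{\alpha,a}V_\alpha]$ contributions, the same bound $\sum_{a\in\Lambda_\alpha}\mathbb{E}X_{\alpha,a}\leq(\mu_1^*)^{e(G)}$, the same cardinality estimates for $\Gamma\setminus\Gamma_\alpha^w$ and for the overlap classes $\Gamma_\alpha^{b,i}$, and the same use of strict balance, via $\gamma(G)\leq i\,d(G)-e(H)$ and $(v(G)-i)\alpha(G)\leq e(G)-e(H)$, to show that the union graph $\alpha\cup\beta$ has at least $e(G)+\kappa(G,i)$ distinct edge positions. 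The paper imports this last fact from the proof of Theorem 2.1 of \cite{cgr16}, whereas you rederive it from \eqref{alpha} and \eqref{gamma}; that is a cosmetic difference.

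The one step that does not close as written is exactly the one you flag yourself. For a shared edge position $(u,v)$, summing the joint occurrence probability over the two edge labels gives $\sum_{j,k\geq1}\mathbb{P}(Y_{c_u,c_v}\geq j\vee k)=\mathbb{E}[Y_{c_u,c_v}^2]$, and for a non-negative integer-valued $Y$ one always has $\mathbb{E}[Y^2]\geq\mathbb{E}Y$, with equality only when $Y$ is $\{0,1\}$-valued. So replacing each shared-position factor by $\mu_1^*$ cannot be justified by an appeal to ``the sparse regime'': Theorem \ref{thm2.1} carries no sparsity hypothesis, and smallness of $\mathbb{E}Y$ does not by itself control $\mathbb{E}[Y^2]$ (the ratio $\mathbb{E}[Y^2]/\mathbb{E}Y$ can be arbitrarily large even when $\mathbb{E}Y\to0$). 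The clean repair is the one the paper carries out for multigraphs in the proof of Theorem \ref{thm3.1}: bound each shared position by a second-moment quantity (there, $2\mu_{2t(G)}^*$) and let the final bound involve $\max\{\mu_1^*,2\mu_2^*\}$ in place of $\mu_1^*$ at the exponent $\kappa(G,i)$. You should be aware that the paper's own proof of Theorem \ref{thm2.1} asserts the estimate \eqref{kappa1} ``by following the steps used to obtain \eqref{mueqn} and \eqref{yineq}'' without addressing shared positions at all, so you have identified a genuine subtlety in the argument rather than introduced a new one; but naming the obstacle and then waving at sparsity is not the same as resolving it.
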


\begin{proof}We establish our bound by bounding $\epsilon=\sum_{\alpha\in\Gamma}\sum_{a\in\Lambda_\alpha}
\big\{\mathbb{E}X_{\alpha,a}\mathbb{E}\eta_\alpha+\mathbb{E}[X_{\alpha,a}V_\alpha]\big\}$.  Combining (\ref{mueqn}) and (\ref{pi**}) gives 
\begin{align}\sum_{a\in\Lambda_\alpha}\mathbb{E}X_{\alpha,a}&=\sum_{c_1,c_2,\ldots,c_{v(G)}=1}^{Q} f_{c_1}f_{c_2}
\cdots f_{c_{v(G)}} \prod_{1 \leq u<v \leq v(G): (u,v) \in E(\alpha) }\mathbb{E}Y_{c_u,c_v}\nonumber \\
&\leq \sum_{c_1,c_2,\ldots,c_{v(G)}=1}^{Q} f_{c_1}f_{c_2}
\cdots f_{c_{v(G)}} (\mu_1^*)^{e(G)}\nonumber \\
\label{yineq}&= (\mu_1^*)^{e(G)}.
\end{align}
Next we bound  $\mathbb{E}\eta_\alpha=\sum_{\beta\in\Gamma\setminus\Gamma_\alpha^w}\sum_{b\in\Lambda_\beta}\mathbb{E}X_{\beta,b}$.  Noting that 
\begin{align*}|\Gamma\setminus\Gamma_\alpha^w|&=|\{\beta\in\Gamma_\alpha^b\,:\,|V(\alpha)\cap V(\beta)| \ge 1\}|\\
&= \rho(G) \sum_{k=0}^{v(G)-1 } \binom{n-v(G)}{k} \binom{v(G)}{v(G)-k}  \\
&\leq \rho(G) v(G)  \sum_{k=0}^{v(G)-1 } \binom{n-v(G)}{n-v(G)-k} \binom{v(G)-1}{k}  \\
&=  \rho(G) v(G)  \binom{(n-v(G))+(v(G)-1)}{v(G)-1} \leq\rho(G) v(G)  \binom{n}{v(G)-1}\\
&\leq \rho (G) \frac{v(G)^2}{v(G)!} n^{v(G)-1} ,
\end{align*}
where we used that $\binom{n}{v(G)-1}\leq \frac{n^{v(G)-1}}{v(G)!}$. Combining this bound on $|\Gamma\setminus\Gamma_\alpha^w|$ with (\ref{yineq}) yields
\begin{align}\label{etaineq}\mathbb{E}\eta_\alpha=\sum_{\beta\in\Gamma\setminus\Gamma_\alpha^w}\sum_{b\in\Lambda_\beta}\mathbb{E}X_{\beta,b}\leq|\Gamma\setminus\Gamma_\alpha^w|(\mu_1^*)^{e(G)}\leq  \rho(G)\frac{v(G)^2}{v(G)!} n^{v(G)-1}   (\mu_1^*)^{e(G)}.
\end{align}
From (\ref{yineq}) and (\ref{etaineq}), 
\begin{align}\label{im1}\sum_{\alpha\in\Gamma}\sum_{a\in\Lambda_\alpha}\mathbb{E}
X_{\alpha,a}\mathbb{E}\eta_\alpha&\leq \rho(G)\binom{n}{v(G)}\cdot (\mu_1^*)^{e(G)}\cdot \rho(G)\frac{v(G)^2}{v(G)!} n^{v(G)-1}  (\mu_1^*)^{e(G)}\nonumber \\
&\leq \rho(G)^2\frac{v(G)^2}{(v(G)!)^2}  n^{2v(G)-1 }(\mu_1^*)^{2e(G)}.
\end{align}

Finally, we bound the quantity $\sum_{\alpha\in\Gamma}\sum_{a\in\Lambda_\alpha}\mathbb{E}[X_{\alpha,a}V_\alpha]$.  To do so, we partition $\Gamma_\alpha^b$ into sets $\{\Gamma_\alpha^{b,i}\}_{1\leq i\leq v(G)-1}$, where $\Gamma_\alpha^{b,i}=\{\beta\,:\,|\alpha\cap\beta|=i\}$.  The cardinality of these sets can be bounded above by
\begin{equation}\label{kappa0}|\Gamma_\alpha^{b,i}|\leq \rho(G)\binom{v(G)}{i} \binom{n}{v(G)-i} \leq \rho(G)\binom{v(G)}{i} \frac{n^{v(G)-i}}{(v(G)-i)!}.
\end{equation}
Let us now bound the quantity $\sum_{a\in\Lambda_\alpha}\sum_{b\in\Lambda_\beta}\mathbb{E}[X_{\alpha,a}X_{\beta,b}]$, where $\beta\in\Gamma_\alpha^{b,i}$ for $1\leq i\leq v(G)-1$.  It was shown in the proof of Theorem 2.1 of \cite{cgr16} that for a strictly balanced graph $G$, isomorphic copies $\alpha$ and $\beta$ that share $i$ vertices have at least $e(G)+\kappa(G,i)$ edges in the union graph $\alpha\cup\beta$.  Therefore, by following the steps used to obtain (\ref{mueqn}) and (\ref{yineq}), we obtain
\begin{equation}\label{kappa1}\sum_{a\in\Lambda_\alpha}\sum_{b\in\Lambda_\beta}\mathbb{E}[X_{\alpha,a}X_{\beta,b}]\leq (\mu_1^*)^{e(G)+\kappa(G,i)} \quad \text{for $\beta\in\Gamma_\alpha^{b,i}$}.
\end{equation}
Combining (\ref{kappa0}) and (\ref{kappa1}) now yields
\begin{align}\sum_{\alpha\in\Gamma}\sum_{a\in\Lambda_\alpha}\mathbb{E}[X_{\alpha,a}V_\alpha]&= \sum_{\alpha\in\Gamma}\sum_{i=1}^{v(G)-1}\sum_{\beta_i\in\Gamma_\alpha^{b,i}}\sum_{a\in\Lambda_\alpha}\sum_{b_i\in\Lambda_{\beta_i}}\mathbb{E}[X_{\alpha,a}X_{\beta_i,b_i}] \nonumber \\
&\leq\rho(G)\binom{n}{v(G)}\sum_{i=1}^{v(G)-1}\rho(G)\binom{v(G)}{i} \frac{n^{v(G)-i}}{(v(G)-i)!}\cdot (\mu_1^*)^{e(G)+\kappa(G,i)}\nonumber \\
\label{im2}&\leq\rho(G)^2(\mu_1^*)^{e(G)}\frac{n^{v(G)}}{v(G)!}\sum_{i=1}^{v(G)-1}\binom{v(G)}{i} \frac{n^{v(G)-i}(\mu_1^*)^{\kappa(G,i)}}{(v(G)-i)!}.
\end{align}
Applying (\ref{dtv}) together with the bounds (\ref{im1}) and (\ref{im2}) then yields (\ref{ermg1111thm}).
\end{proof}

\begin{remark}\label{rem1}The result of Theorem \ref{thm2.1} is perhaps most interesting when the limiting $CP(\boldsymbol{\lambda})$ distribution is non-degenerate in the limit $n\rightarrow\infty$.  For this to be the case, we require the mean $\mathbb{E}W$ to be non-degenerate, that is $\mathbb{E}W$ does not tend to 0 or $\infty$ in the limit $n\rightarrow\infty$.  Suppose that there exist universal constants $c$ and $C$ such that $cn^{-1/d(G)}\leq \mathbb{E}Y_{a,b}\leq Cn^{-1/d(G)}$ for all $a,b$.  Then using the inequality 
 $\frac{m^k}{k^k}\leq\binom{m}{k}\leq\frac{m^k}{k!}$, $1\leq k\leq m$,  and \eqref{lambdaeqn} we obtain 
\begin{equation*}\frac{\rho(G)}{v(G)^{v(G)}}c^{e(G)}\leq\mathbb{E}W\leq\frac{\rho(G)}{v(G)!}C^{e(G)}.
\end{equation*} 
In this parameter regime, 
\begin{equation*}\sum_{i=1}^\infty \lambda_i\leq \sum_{i=1}^\infty i\lambda_i=\mathbb{E}W=O(1).
\end{equation*}
Therefore, from (\ref{c2eqn}), we have that $c(\boldsymbol{\lambda})\leq K(G)$ for some $K(G)>0$ which does not involve $n$.  Applying Theorem \ref{thm2.1}  then yields 
\begin{equation} \label{corpn}
d_{TV}(\mathcal{L}(W),CP(\boldsymbol{\lambda})) \leq  \frac{K(G)\rho(G)^2}{v(G)!}C^{e(G)}\bigg\{ \frac{v(G)^2}{v(G)!}C^{e(G)}n^{-1} +\min(A,B) \bigg\}, 
\end{equation}
where
\begin{eqnarray*}A&=&(1+C^{\alpha(G)})^{v(G)-1}n^{1-\alpha(G)/d(G)};\\
B&=&C^{e(G)+\gamma(G)}(1+C^{-d(G)})^{v(G)-1}n^{-\gamma(G)/d(G)}.
\end{eqnarray*}
\end{remark}

\begin{example}Here we use \eqref{corpn} to obtain compound Poisson approximations for the number of copies of the following fixed graphs with $v\geq3$ vertices in the $SBMM(n,\pi,f)$ model.  We consider the following strictly balanced graphs on $v$ vertices each (the same choice as in Remark 2.3 of \cite{cgr16}):

\begin{enumerate}
\item[$G_{1,v}$] a tree on the $v$ vertices, with $v-1$ edges;

\item[$G_{2,v}$] the cycle graph on the $v$ vertices (with $v$ edges);

\item[$G_{3,v}$] the complete graph on $v$ vertices with one edge removed;

\item[$G_{4,v}$] the complete graph on $v$ vertices. 
\end{enumerate}

To apply \eqref{corpn}, we must compute the quantities $d(G)$, $\alpha(G)$ and $\gamma(G)$ for each graph $G$.  These values were computed in Remark 2.3 of \cite{cgr16} and are presented in Table \ref{dag}.  If for a given graph $G$  there exist universal constants $c$ and $C$ such that $cn^{-1/d(G)}\leq \mathbb{E}Y_{a,b}\leq Cn^{-1/d(G)}$ for all $a,b$ (for which the limit distribution is non-degenerate), then a bound on $d_{TV}(\mathcal{L}(W),CP(\boldsymbol{\lambda}))$ now follows from  \eqref{corpn}.  The appropriate scaling of $\mu_1^*=\max_{a,b}\mathbb{E}Y_{a,b}$ (note that all the $\mathbb{E}Y_{a,b}$ are all of the same order) is also reported in the table, as well as a bound on the rate of convergence in this parameter regime. For this rate of convergence it is assumed that the proportion vector $f= f(n) $ remains  constant as $n \rightarrow \infty$,  and that $G$ does not change with $n$.  The conclusions we draw from Table \ref{dag} are very similar to those drawn in Remark 2.3 of \cite{cgr16}; for example, the bound on the rate of convergence for the tree graph may be considerably larger than the bound on the rate of convergence in the cycle graph.  

We also note that if $\mathbb{E}Y_{a,b}=O(n^{-1})$ for all $a$, $b$ (so that $\mu_1^*=O(n^{-1})$) then this gives rise to graphs with bounded average degree, which are important in network science.  With the scaling $Cn^{-1}$ for $\mu_1^*$, the number of isomorphic copies of $G_{2,v}$, the cycle graph on $v$ vertices, is seen to have an approximation non-degenerate compound Poisson distribution.  With the scaling $Cn^{-v/(v-1)}$ for $\mu_1^*$, the number of copies of the tree graph $G_{1,v}$ also has an approximation non-degenerate Compound Poisson distribution, and the average degree is $O(n^{-1/(v-1)})$; however, the scalings required to ensure that the limiting compound Poisson distribution for the number of copies of $G_{3,v}$ and $G_{4,v}$ is non-degenerate gives rises to graphs with unbounded average degree.  

\begin{table}[ht]
\caption{Values of $d(G)$, $\alpha(G)$ and $\gamma(G)$, and scaling and bounds on the rate of convergence} 
\centering

\begin{tabular}{ |p{0.8cm}||p{1.6cm}|p{1.3cm}|p{1.8cm}|p{2.8cm}|p{3.8cm}|  }
 \hline
  $G$ & $d(G)$ &$\alpha(G)$&$\gamma(G)$& Scaling of $\mu_1^*$ & $d_{TV}(\mathcal{L}(W),CP(\boldsymbol{\lambda}))$\\
  \hline
 $G_{1,v}$   & $\frac{v-1}{v}$    & $1$ & $\frac{1}{v}$ & $Cn^{-v/(v-1)}$ & $O(n^{-1/(v-1)})$ \vspace{1mm} \\
 $G_{2,v}$ & 1 & $\frac{v-1}{v-2}$ & $1$ & $Cn^{-1}$ & $O(n^{-1})$ \vspace{1mm}\\
 $G_{3,v}$   & $\frac{(v+1)(v-2)}{2v}$ & $\frac{v^2-v-4}{2(v-2)}$ &  $\frac{1}{3}$ if $v=3$ & $Cn^{-2v/(v+1)(v-2)}$ & $O(n^{-1/2})$ if $v=3$ \vspace{1mm} \\  
 &&& $1$ if $v\geq4$ && $O(n^{-2/(v-1)})$ if $v\geq4$ \\
 $G_{4,v}$  &   $\frac{v-1}{2}$  & $\frac{v+1}{2}$ & $1$ & $Cn^{-2/(v-1)}$ & $O(n^{-2/(v-1)})$ \vspace{1mm}\\
 \hline
\end{tabular}
\label{dag}
\end{table}

\end{example}

\begin{example}\label{ex3.4}
The Poisson stochastic blockmodel from \cite{kn11}, for which the number of edges between vertices of type $a$ and $b$  follows a Poisson distribution with parameter $\omega_{a,b}$, falls into the framework of Theorem \ref{thm2.1}, with $\mu_1^*=\max_{a,b}\mathbb{E}Y_{a,b}=\max_{a,b}\omega_{a,b}=\omega^*$.  According to Remark \ref{rem1}, the mean of the limiting compound distribution is non-degenerate if $cn^{-1/d(G)}\leq\omega_{a,b}\leq Cn^{-1/d(G)}$ for all $a,b$.  In such a regime, the bound (\ref{corpn}) is applicable.

One can similarly apply Theorem \ref{thm3.1} (below) to the Poisson stochastic blockmodel, although we omit the details.  However, in Corollary \ref{cor5.5} we work out the details for a Poisson approximation in the Poisson stochastic blockmodel.
\end{example}

We end this section by considering  the setting of Theorem \ref{thm2.1}, but with edge probabilities (\ref{hom}) given by the more general 
\begin{equation}\label{inhom}\mathbb{P}(\tilde{Y}_{i,j}=k\,|\,c_i= a, c_j = b)=\pi_{a,b,i,j,k}, \quad k=0,1,2,\ldots.
\end{equation}
In this more general setting, the derivation of a bound on the total variation distance between $\mathcal{L}(W)$ and the limiting $CP(\boldsymbol{\lambda})$ is almost unchanged. There is no change to the combinatorial arguments and the sole change is that one bounds the expected number of edges by the quantity $\max_{a,b,i,j}\mathbb{E}Y_{a,b,i,j}$, where the random variable $Y_{a,b,i,j}$ has probability mass function $\mathbb{P}(Y_{a,b,i,j}=k)=\pi_{a,b,i,j,k}$, $k\geq0$.  In this way, one obtains the following.

\begin{corollary}\label{cor88}With the same setting as Theorem \ref{thm2.1}, but the edge probabilities given by (\ref{inhom}) instead of (\ref{hom}), we have the bound
\begin{equation}\label{inhombound}d_{TV}(\mathcal{L}(W),CP(\boldsymbol{\lambda}))\leq  M_{n,\pi,f}^*(G),
\end{equation}
where $M_{n,\pi,f}^*(G)$ is the bound (\ref{ermg1111thm}) of Theorem \ref{thm2.1} with the quantity  $\mu_1^*=\max_{a,b}Y_{a,b}$ replaced by the quantity $\max_{a,b,i,j}\mathbb{E}Y_{a,b,i,j}$.
\end{corollary}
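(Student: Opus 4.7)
The plan is to replay the proof of Theorem~\ref{thm2.1}, noting that the only place where the homogeneity assumption~(\ref{hom}) is used substantively is in the per-edge bound that produces the factors $(\mu_1^*)^{e(G)}$ and $(\mu_1^*)^{e(G)+\kappa(G,i)}$. Under~(\ref{inhom}) the class labels are still drawn independently from $\mathcal{M}(1,f)$ and, conditionally on these labels, the edge counts $\tilde Y_{i,j}$ between distinct vertex pairs remain independent. This conditional independence is the only probabilistic input to the Stein-Chen setup of Section~\ref{sec2.3}, so the decomposition $\Gamma\setminus\{\alpha\}=\Gamma_\alpha^s\cup\Gamma_\alpha^b\cup\Gamma_\alpha^w$, the argument that forces $\phi_{\alpha,a,i}=0$, and the inequality~(\ref{dtv}) carry over verbatim.

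First I would repeat the derivation of~(\ref{mueqn}) with each conditional probability taken from~(\ref{inhom}) instead of~(\ref{hom}); summing over $a\in\Lambda_\alpha$ again telescopes the relevant tail sums into expectations, giving
\[
\sum_{a\in\Lambda_\alpha}\mathbb{E}X_{\alpha,a}=\sum_{c_1,\ldots,c_{v(G)}=1}^{Q} f_{c_1}\cdots f_{c_{v(G)}}\prod_{(u,v)\in E(\alpha)}\mathbb{E}Y_{c_u,c_v,i_u,i_v},
\]
where $i_1,\ldots,i_{v(G)}$ are the vertex labels of $\alpha$. Bounding every factor by $\mu_1^{**}:=\max_{a,b,i,j}\mathbb{E}Y_{a,b,i,j}$ recovers the analogue $\sum_{a\in\Lambda_\alpha}\mathbb{E}X_{\alpha,a}\leq(\mu_1^{**})^{e(G)}$ of~(\ref{yineq}). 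The same manipulation applied to $\sum_{a,b}\mathbb{E}[X_{\alpha,a}X_{\beta,b}]$ for a pair with $|V(\alpha)\cap V(\beta)|=i$, combined with the purely combinatorial fact (used in~\cite{cgr16}) that strict balance of $G$ forces at least $e(G)+\kappa(G,i)$ distinct edges in $\alpha\cup\beta$, delivers the analogue $(\mu_1^{**})^{e(G)+\kappa(G,i)}$ of~(\ref{kappa1}).

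Everything else is combinatorial: the cardinality estimates for $|\Gamma\setminus\Gamma_\alpha^w|$ and $|\Gamma_\alpha^{b,i}|$ depend only on $n$, $v(G)$ and $\rho(G)$, not on the edge probabilities. Plugging the two new per-edge bounds into~(\ref{im1}) and~(\ref{im2}) and then into~(\ref{dtv}) reproduces~(\ref{ermg1111thm}) with $\mu_1^*$ replaced by $\mu_1^{**}$, which is exactly~(\ref{inhombound}). I anticipate no serious obstacle; the only point requiring a moment's care is checking that $Z_\alpha$ remains independent of $\{X_{\beta,b}:\beta\in\Gamma_\alpha^w,\,b\in\Lambda_\beta\}$ in the heterogeneous model, but this is immediate from~(\ref{inhom}) since edges between disjoint vertex pairs are functions only of the classes of those vertex pairs, and the classes of disjoint vertex sets are independent.
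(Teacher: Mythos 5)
Your proposal is correct and follows essentially the same route as the paper: the combinatorial estimates are untouched, and the only change is to bound each per-edge expectation by $\max_{a,b,i,j}\mathbb{E}Y_{a,b,i,j}$ in place of $\mu_1^*$, with the independence structure (and hence the vanishing of the $\phi_{\alpha,a,i}$ terms) preserved under (\ref{inhom}). Your proposal is in fact somewhat more explicit than the paper's brief remark, which simply notes that the sole change is in the per-edge bound.
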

Here $\boldsymbol{\lambda}$ is given by (\ref{lamlam}), but, in contrast to the setting of Theorem \ref{thm2.1}, we cannot compute $\boldsymbol{\lambda}$ using expression (\ref{conprob2}) because the edge probabilities (\ref{inhom}) depend on the vertices at which the edge is incident.  The results derived in the remainder of the paper generalise to edge probabilities of the form (\ref{inhom}) similarly.

\begin{example}\label{ex9}Recall that the degree-corrected stochastic blockmodel of \cite{kn11} has edge probabilities that follow the $Po(\theta_1\theta_j\omega_{a,b})$ distribution.  This model falls into the framework of Corollary \ref{cor88} and we can apply the bound (\ref{inhombound}) with $\max_{a,b,i,j}\mathbb{E}Y_{a,b,i,j}\leq(\theta^*)^2\omega^*$, where $\theta^*=\max_i \theta_i$ and $\omega^*=\max_{a,b}\omega_{a,b}$.
\end{example}

\section{Compound Poisson approximation in the SBMM: extension to fixed multigraphs}\label{sec4}

In this section, we generalise Theorem \ref{thm2.1} to the case that the fixed graph $G$ is a strictly pseudo-balanced multigraph.  We begin by introducing appropriate generalisations of the notation (\ref{alpha}) and (\ref{gamma}).  With $f(G)$ denoting the number of pairs of vertices with at least one edge between them and $\partial(G)$ defined as in (\ref{partial}), we let
\begin{equation}\label{alpha2}
\alpha_m(G) = \min_H \frac{f(G)-f(H)}{v(G)-v(H)}
\end{equation}
and
\begin{equation}\label{gamma2} 
\gamma_m(G) =\min_H \{ \partial(G)v(H)-e(H) \} = \min_H [v(H) \{ \partial(G)-d(H) \} ],
\end{equation}
where the minima are taken over all non-empty proper subgraphs $H \subsetneq G$ such that $f(H)<f(G)$.  In interpreting the following theorem, it is worth noting that the multigraph graph $G$ is strictly pseudo-balanced if $\gamma_m(G)>0$ or $\alpha_m(G)>\partial(G)$.  Finally, we let
\begin{equation}\label{pi***}\mu_k^*=\max_{a,b}\mathbb{E}[Y_{a,b}^k] \quad \text{and} \quad \mu_k^{**}=\max_{a,b}\mathbb{E}\bigg[\binom{Y_{a,b}}{k}\bigg].
\end{equation}
Let us now state our theorem.


\begin{theorem}\label{thm3.1}Suppose that $G$ is a strictly pseudo-balanced graph  which has $e_i(G)$ pairs of vertices which have $i$ edges between them, with $e_{t(G)}(G)>0$ and $e_i(G)=0$ for all $i>t(G)$.  Then, with the  notation \eqref{rho}, \eqref{mueqn2}, \eqref{lamlam}, \eqref{alpha2}, \eqref{gamma2} and \eqref{pi***},
\begin{align}\label{ermg1111thm2}d_{TV}(\mathcal{L}(W),CP(\boldsymbol{\lambda}))  &\leq  \frac{c(\boldsymbol{\lambda})\rho(G)^2}{v(G)!}n^{v(G)} \bigg\{\frac{v(G)^2}{v(G)!}n^{v(G)-1} \prod_{i=1}^{t(G)} \bigg((\mu_i^{**})^{2e_i(G)}\bigg)
\nonumber \\
 &\quad+ \sum_{i=1}^{v(G)-1} \binom{v(G)}{i}  \frac{n^{v(G)-i}(\psi(G))^{e(G)+\kappa_m(G,i)}}{(v(G)-i)!} \bigg\},  
\end{align}
where 
\begin{equation*} \label{kappa9} 
\kappa_m(G,i)=\max\{e(G)-i\partial(G)+\gamma_m(G),(v(G)-i)\alpha_m(G)\},
\end{equation*} 
and 
\begin{equation}\psi(G)=\max\bigg\{2\mu_{2t(G)}^*,\max_{1\leq j\leq t(G)}(\mu_j^{**})\bigg\}.
\end{equation}
\end{theorem}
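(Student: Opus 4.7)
The overall strategy mirrors that of Theorem~\ref{thm2.1}: the Stein--Chen bound \eqref{dtv} reduces the task to estimating
\[
\epsilon=\sum_{\alpha\in\Gamma}\sum_{a\in\Lambda_\alpha}\bigl\{\mathbb{E}X_{\alpha,a}\,\mathbb{E}\eta_\alpha+\mathbb{E}[X_{\alpha,a}V_\alpha]\bigr\},
\]
which I would split into the ``long-range'' piece involving $\Gamma\setminus\Gamma_\alpha^w$ and the ``boundary'' piece involving $\Gamma_\alpha^b$, exactly as in the proof of Theorem~\ref{thm2.1}. The main difference is that pointwise moment bounds on $Y_{a,b}$ must be replaced by bounds on the multigraph expectations appearing in \eqref{mueqn2}, and the edge-counting lemma that underlies \eqref{kappa1} must be replaced by its pseudo-balanced analogue.

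For the long-range piece I would start from \eqref{mueqn2} and bound each factor $\mathbb{E}[\binom{Y_{c_u,c_v}}{i_{(u,v)}}]$ by $\mu_{i_{(u,v)}}^{**}$, giving the multigraph analogue of \eqref{yineq}:
\[
\sum_{a\in\Lambda_\alpha}\mathbb{E}X_{\alpha,a}=\mu(G)\le\prod_{i=1}^{t(G)}(\mu_i^{**})^{e_i(G)}.
\]
The combinatorial bound $|\Gamma\setminus\Gamma_\alpha^w|\le\rho(G)v(G)^2 n^{v(G)-1}/v(G)!$ is purely vertex-based and transfers unchanged. Multiplying these two bounds, squaring to account for the product $\mathbb{E}X_{\alpha,a}\cdot\mathbb{E}\eta_\alpha$, and summing over $\alpha\in\Gamma$ yields the first term inside the braces of \eqref{ermg1111thm2}.

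For the boundary piece I would partition $\Gamma_\alpha^b$ into $\Gamma_\alpha^{b,i}=\{\beta:|V(\alpha)\cap V(\beta)|=i\}$, for which \eqref{kappa0} still holds. The key local estimate to establish is that, for any $\beta\in\Gamma_\alpha^{b,i}$,
\[
\sum_{a\in\Lambda_\alpha}\sum_{b\in\Lambda_\beta}\mathbb{E}[X_{\alpha,a}X_{\beta,b}]\le(\psi(G))^{e(G)+\kappa_m(G,i)}.
\]
Conditioning on the vertex classes and using the independence of $Y_{c_u,c_v}$ across distinct vertex pairs, the double sum factorises over the pairs of $\alpha\cup\beta$: a pair lying in only one of $\alpha,\beta$ with multiplicity $k\le t(G)$ contributes $\mathbb{E}[\binom{Y}{k}]\le\mu_k^{**}\le\psi(G)$, and a shared pair with multiplicities $k_1,k_2\le t(G)$ contributes $\mathbb{E}[\binom{Y}{k_1}\binom{Y}{k_2}]$, which via the elementary inequality $\binom{y}{k_1}\binom{y}{k_2}\le\binom{k_1+k_2}{k_1}\binom{y}{k_1+k_2}$ and crude moment comparisons is bounded by $2\mu_{2t(G)}^*\le\psi(G)$. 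It therefore suffices to show that the number of such factors, weighted by multiplicity, is at least $e(G)+\kappa_m(G,i)$.

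The main obstacle is this last combinatorial step. The crucial observation is that the strictly pseudo-balanced condition is precisely the strictly balanced condition for the underlying simple graph obtained from $G$ by collapsing every multi-edge to a single edge, so the argument of \cite{cgr16} that produced at least $e(G)+\kappa(G,i)$ edges in $\alpha\cup\beta$ for a strictly balanced simple graph transfers, with $d$, $\alpha$ and $\gamma$ replaced by $\partial$, $\alpha_m$ and $\gamma_m$, to yield a lower bound on the number of pair-factors; reinstating multiplicities $i_{(u,v)}\le t(G)$ (absorbed into $\psi(G)$ through the $\mu_{2t(G)}^*$ term) then promotes this to the required exponent $e(G)+\kappa_m(G,i)$. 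The bookkeeping for the two branches of the maximum in $\kappa_m(G,i)$ mirrors that in \cite{cgr16}, with the $\alpha_m$ branch governing the regime of small overlap $i$ and the $\gamma_m$ branch the large-overlap regime. Once this pseudo-balanced edge-count lemma is in hand, summing over $i$, $\beta\in\Gamma_\alpha^{b,i}$ and $\alpha\in\Gamma$ using \eqref{kappa0} produces the second term of \eqref{ermg1111thm2}, and combining with the long-range piece via \eqref{dtv} completes the proof.
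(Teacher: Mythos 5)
Your proposal follows essentially the same route as the paper's proof: the same splitting of $\epsilon$ via \eqref{dtv}, the same bound $\mu(G)\le\prod_{i}(\mu_i^{**})^{e_i(G)}$ for the long-range term, the same partition of $\Gamma_\alpha^b$ using \eqref{kappa0}, the same case distinction between vertex pairs meeting $V(\alpha)\cap V(\beta)$ and those that do not, and the same appeal to the pseudo-balanced analogue of the edge-counting lemma from \cite{cgr16} (which the paper also invokes without re-deriving, at the same level of detail as you do). One concrete correction: the inequality $\binom{y}{k_1}\binom{y}{k_2}\le\binom{k_1+k_2}{k_1}\binom{y}{k_1+k_2}$ that you cite for the shared-pair contribution is false; since $\binom{y}{k_1}\binom{y-k_1}{k_2}=\binom{k_1+k_2}{k_1}\binom{y}{k_1+k_2}$ and $\binom{y}{k_2}\ge\binom{y-k_1}{k_2}$, the true inequality runs the other way (e.g.\ $y=2$, $k_1=k_2=1$ gives $4\not\le 2$). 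The bound you need, $\mathbb{E}\bigl[\binom{Y}{p}\binom{Y}{q}\bigr]\le 2\mu^*_{2t(G)}$, nonetheless holds by the cruder estimate $\binom{y}{p}\binom{y}{q}\le y^{p+q}/(p!\,q!)\le y^{2t(G)}$ for $y\ge 1$ (trivially for $y=0$), which is the "crude moment comparison" you allude to; the paper reaches the same conclusion by manipulating the double sum $\sum_{k\ge p}\sum_{j\ge q}\binom{k-1}{p-1}\binom{j-1}{q-1}\mathbb{P}(Y\ge k\vee j)$ directly, which is exactly $\mathbb{E}\bigl[\binom{Y}{p}\binom{Y}{q}\bigr]$ as you identify.
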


\begin{proof}The proof follows along the same lines as that of Theorem \ref{thm2.1}.  Again, we bound $\epsilon$.  Firstly, combining (\ref{mueqn2}) and (\ref{pi***}) yields  
\begin{align}\sum_{a\in\Lambda_\alpha}\mathbb{E}X_{\alpha,a}&=\sum_{c_1,c_2,\ldots,c_{v(G)}=1}^{Q} f_{c_1}f_{c_2}
\cdots f_{c_{v(G)}} \prod_{1 \leq u<v \leq v(G): (u,v) \in E(G) }\mathbb{E}\bigg[\binom{Y_{c_u,c_v}}{i_{(u,v)}}\bigg]\nonumber \\
&\leq \sum_{c_1,c_2,\ldots,c_{v(G)}=1}^{Q} f_{c_1}f_{c_2}
\cdots f_{c_{v(G)}}\prod_{i=1}^{t(G)}\Big((\mu_i^{**})^{e_i(G)}\Big)\nonumber \\
\label{yineq2}&= \prod_{i=1}^{t(G)}\Big((\mu_i^{**})^{e_i(G)}\Big).
\end{align}
We bound $\mathbb{E}\eta_\alpha$ in the same way we did in the proof of Theorem \ref{thm2.1} to obtain
\begin{equation*}\mathbb{E}\eta_\alpha\leq n^{v(G)-1}   \rho(G)\frac{v(G)^2}{v(G)!} \prod_{i=1}^{t(G)}\Big((\mu_i^{**})^{e_i(G)}\Big),
\end{equation*}
and thus
\begin{equation}\label{im12}\sum_{\alpha\in\Gamma}\sum_{a\in\Lambda_\alpha}\mathbb{E}X_{\alpha,a}
\mathbb{E}\eta_\alpha\leq  \rho(G)^2\frac{v(G)^2}{(v(G)!)^2}n^{2v(G)-1}  \prod_{i=1}^{t(G)}\bigg((\mu_i^{**})^{2e_i(G)}\bigg).
\end{equation}

Finally, we bound the quantity $\sum_{\alpha\in\Gamma}\sum_{a\in\Lambda_\alpha}\mathbb{E}[X_{\alpha,a}V_\alpha]$.  We can use the bound (\ref{kappa0}) for the cardinality of the sets $\Gamma_\alpha^{b,i}$, $1\leq i\leq v(G)-1$.  However, we must adapt the argument given in Theorem \ref{thm2.1} in order to bound the quantity 
\begin{equation}\label{quan}\sum_{a\in\Lambda_\alpha}\sum_{b\in\Lambda_\beta}\mathbb{E}[X_{\alpha,a}X_{\beta,b}],
\end{equation}
where $\beta\in\Gamma_\alpha^{b,i}$ for $1\leq i\leq v(G)-1$.  Following the proof of Theorem \ref{thm2.1}, we note that for a strictly pseudo-balanced graph $G$, isomorphic copies $\alpha$ and $\beta$ that share $i$ vertices have at least $e(G)+\kappa_m(G,i)$ edges in the union graph $\alpha\cup\beta$.  In bounding (\ref{quan}), we treat the following two cases separately: edge positions in which at least one vertex from the union graph $\alpha\cup\beta$ is not in $V(\alpha)\cap V(\beta)$, and edge positions that are in the intersection $V(\alpha)\cap V(\beta)$.

We consider firstly the case that at least one vertex from the union graph $\alpha\cup\beta$ is not in $V(\alpha)\cap V(\beta)$.  Consider a pair of vertices $(u,v)$ in which $u$ has class $a$ and $v$ has class $b$.  Suppose that at this edge position  the number of edges in the multigraph $G$ is $j$.  Then by arguing as we did in establishing (\ref{mueqn2}) and (\ref{yineq2}) we can bound the expected number occurrences of $j$ edges at this edge position by $\mu_j^{**}$.

Next, we consider the edges inside of the intersection $V(\alpha) \cap V(\beta)$.  So, if $\alpha$ has $p>0$ edges at $(u,v)$ and $\beta$ has $q>0$ edges at $(u,v)$, then the expected number of edges is given by
\begin{equation}\label{edge22}
\sum_{\substack{k_1<k_2<\ldots<k_p \\ l_1<l_2<\ldots<l_q}} \mathbb{P}(Y>k_p \vee l_q) = \sum_{k=p}^\infty \sum_{j=q}^\infty  \binom{k-1}{p-1} \binom{j-1}{q-1} \mathbb{P}(Y \geq k \vee j),
\end{equation}
where $x\vee y=\max\{x,y\}$ and we have set $Y=Y_{a,b}$ for the remainder of this proof.  The above expression comes from considering the different ways in which we could choose the $p$ edges we include in the subgraph located at $\alpha$ and the $q$ in $\beta$. This gives us a count for the number of ways in which we can choose the motif from the edges present once we have picked the location, as we may have more edges than necessary in one location leading to subgraphs appearing in each location with multiplicity.  Also, we note that in bounding (\ref{edge22}) without loss of generality we can suppose $p\geq q$.
Using this argument, \eqref{edge22}  can be bounded as follows; 
\begin{align*}&\sum_{k=p}^\infty \left\{ \sum_{j=q}^k \binom{k-1}{p-1} \binom{j-1}{q-1} \mathbb{P}(Y \geq k)+ 
\binom{k-1}{p-1} \sum_{j=k+1}^\infty  \binom{j-1}{q-1} \mathbb{P}(Y \geq j)
\right\} \\
\quad\quad\quad&=\sum_{k=p}^\infty\binom{k}{q}  \binom{k-1}{p-1}  \mathbb{P}(Y \geq k)+ 
\sum_{j=p+1}^\infty\sum_{k=p}^{j-1} \binom{k-1}{p-1}  \binom{j-1}{q-1} \mathbb{P}(Y \geq j)\\
\quad\quad\quad&=\sum_{k=p}^\infty\binom{k}{q}  \binom{k-1}{p-1}  \mathbb{P}(Y \geq k)+ 
 \sum_{j=p+1}^\infty \binom{j-1}{p}  \binom{j-1}{q-1} \mathbb{P}(Y \geq j)\\
\quad\quad\quad&\leq\sum_{k=p}^\infty\binom{k}{q}  \binom{k-1}{p-1}  \mathbb{P}(Y \geq k)+ 
 \sum_{j=p+1}^\infty \binom{j}{p}  \binom{j-1}{q-1} \mathbb{P}(Y \geq j), 
\end{align*} 
where we used that $\sum_{k=n}^m \binom{k}{n} = \binom{m+1}{n+1}$.  Now we bound for, $k \ge p \ge q$, 
\begin{align*}
\binom{k}{q} \binom{k-1}{p-1} + 
  \binom{k}{p}  \binom{k-1}{q-1}&=\bigg(\frac{1}{q}+\frac{1}{p}\bigg)\frac{1}{(p-1)!(q-1)!}\frac{k!(k-1)!}{(k-q)!(k-p)!} \\
  &\le\frac{p+q}{p! q!} k^{p+q-1}.
\end{align*} 
Therefore (\ref{edge22}) can be bounded above by
\begin{align*}
&\frac{p+q}{p!q!}\sum_{k=0}^{\infty} k^{p+q-1} \mathbb{P}(Y \geq k) \leq \frac{p+q}{p!q!}\sum_{k=0}^{\infty} k^{p+q} \mathbb{P}(Y = k)\\
& \quad= \frac{p+q}{p!q!} \mathbb{E}[Y^{p+q}] \leq 2\mathbb{E}[Y^{p+q}]\leq2\mathbb{E}[Y^{2t(G)}]\leq2\mu_{2t(G)}^*,
\end{align*}
where the penultimate inequality holds because $\mathbb{E}Y \leq \mathbb{E}[Y^2] \leq \ldots \leq \mathbb{E}[Y^{2t(G)}]$, and we used that $\frac{p+q}{p!q!}\leq2$ for all $p,q\geq1$.  Therefore, for each edge within the intersection, the expected number of edges is bounded above by $2\mu_{2t(G)}^*$.

Putting everything together, we obtain, for $\beta\in\Gamma_\alpha^{b,i}$,
\begin{align}\label{kappa2}\sum_{a\in\Lambda_\alpha}\sum_{b\in\Lambda_\beta}\mathbb{E}[X_{\alpha,a}X_{\beta,b}]&\leq\Big(\max\Big\{2\mu_{2t(G)}^*,\max_{1\leq j\leq t(G)}(\mu_j^{**})\Big\}\Big) ^{e(G)+\kappa(G,i)}\nonumber\\
&\leq (\psi(G))^{e(G)+\kappa(G,i)}.
\end{align}
Combining (\ref{kappa0}) and (\ref{kappa2}) now yields
\begin{align}\sum_{\alpha\in\Gamma}\sum_{a\in\Lambda_\alpha}\mathbb{E}[X_{\alpha,a}V_\alpha]&= \sum_{\alpha\in\Gamma}\sum_{i=1}^{v(G)-1}\sum_{\beta_i\in\Gamma_\alpha^{b,i}}\sum_{a\in\Lambda_\alpha}\sum_{b_i\in\Lambda_{\beta_i}}\mathbb{E}[X_{\alpha,a}X_{\beta_i,b_i}] \nonumber \\
&\leq\rho(G)\binom{n}{v(G)}\sum_{i=1}^{v(G)-1}\rho(G)\binom{v(G)}{i} \frac{n^{v(G)-i}}{(v(G)-i)!}\cdot (\psi(G))^{e(G)+\kappa(G,i)}\nonumber \\
\label{im22}&\leq\rho(G)^2\frac{n^{v(G)}}{v(G)!}\sum_{i=1}^{v(G)-1}\binom{v(G)}{i} \frac{n^{v(G)-i}(\psi(G))^{e(G)+\kappa(G,i)}}{(v(G)-i)!}.
\end{align}
Applying (\ref{dtv}) together with the bounds (\ref{im12}) and (\ref{im22}) then yields (\ref{ermg1111thm2}).
\end{proof}

\begin{remark}The limiting $CP(\boldsymbol{\lambda})$ distribution is non-degenerate in the limit $n\rightarrow\infty$ if there exist universal constants $c$ and $C$ such that $cn^{-1/\partial(G)}\leq \mathbb{E}[Y_{a,b}^k]\leq Cn^{-1/\partial(G)}$ for all $a,b$ and all $k=1,2,\ldots,2t(G)$.  In this case, arguing as we did in Remark \ref{rem1} gives that 
\begin{equation*}\frac{\rho(G)}{v(G)^{v(G)}}c^{e(G)}\leq\mathbb{E}W\leq\frac{\rho(G)}{v(G)!}C^{e(G)},
\end{equation*} 
where $e(G)$ is the total number of edges in the multigraph $G$, and we also have that $c(\boldsymbol{\lambda})=O(1)$. 
There then exists a constant $K(G)$, not involving $n$, such that
\begin{equation*}d_{TV}(\mathcal{L}(W),CP(\boldsymbol{\lambda}))\leq K(G)\max\big\{n^{-1},\min\{n^{1-\alpha_m(G)/\partial(G)},n^{-\gamma_m(G)/\partial(G)}\}\big\}.
\end{equation*}
\end{remark}

\section{Further results}\label{sec5}

\subsection{Extension to include self-loops}

Here we consider the following generalisation of the $SBMM(n,\pi,f)$ which includes multiple self-loops and thus allows for pseudo-graphs.  The self-loop count random variables $S_i$, $i=1,\ldots,n$, are assumed to be independent conditionally on the class of vertices, and the probability only depends on the class of the vertex:
\begin{equation*}\mathbb{P}(\tilde{S}_i=k\,|\,i\in a)=\theta_{a,k}, \quad k\geq0.
\end{equation*}
We also assume that $\tilde{S}_1,\ldots,\tilde{S}_n$ are mutually independent.  We denote this model by $SBMM^*(n,\pi,\theta,f)$, where the $*$ in the notation emphasises the generalisation of the SBMM model to include self-loops.  When $\theta_{a,0}=1$ for all $1\leq a\leq Q$, the model reduces to the SBMM. 

Let $G$ be a fixed graph with possibly multiple edges and self-loops and $v(G)<n$ vertices.  Denote by $V(G)$ and $E(G)$ its vertex and edge sets.  The edge set now will include the additional detail as to whether there are edge(s) from node $i$ to node $i$, that is whether there are self-loop(s) at vertex $i$.  It is straightforward to generalise Theorems \ref{thm2.1} and \ref{thm3.1} to the $SBMM^*(n,\pi,\theta,f)$, because the inclusion of self-loops makes no difference to the combinatorial arguments of the proofs.  We shall only focus on the generalisation of \ref{thm3.1}; the generalisation of \ref{thm2.1} follows from exactly the same argument.

The setup of Section \ref{sec2.2} is mostly unchanged by the generalisation to self-loops, although the expectation (\ref{mueqn2}) generalises to
\begin{align}\label{mueqn3}\mu(G)&=\sum_{a\in\Gamma_\alpha}\mathbb{E} X_{\alpha,a}=\sum_{c_1,c_2,\ldots,c_{v(G)}=1}^{Q} f_{c_1}f_{c_2}
\cdots f_{c_{v(G)}}\times\nonumber \\
&\quad\times \bigg\{\prod_{1\leq w\leq V(G): (w,w)\in E(G)}\mathbb{E}S_{c_w}\bigg\}\bigg\{ \prod_{1 \leq u<v \leq v(G): (u,v) \in E(G) }\mathbb{E}\bigg[\binom{Y_{c_u,c_v}}{i_{(u,v)}}\bigg]\bigg\},
\end{align}
where $(w,w)$ denotes a self-loop at vertex $w$ and the random variables $S_a$, $1\leq a\leq Q$, have probability mass function $\mathbb{P}(S_a=k)=\theta_{a,k}$, $k=0,1,2,\ldots$.  Using the convention that the empty product is set to 1, we recover (\ref{mueqn2}) from (\ref{mueqn3}) under the case of no self-loops.  The definition (\ref{partial}) for strictly pseudo-balanced graphs is applicable to graphs $G$ that have self-loops, and is equivalent to the condition that the subgraph $G'$ formed by removing all self-loops from $G$ is itself a strictly pseudo-balanced graph.  The same is true of the notation $\alpha_m(G)$, $\gamma_m(G)$ and $\kappa_m(G,i)$ of Section \ref{sec4}.  The analogue of (\ref{pi**}) and (\ref{pi***}) for self-loops is 
\begin{equation}\label{theta}\phi^*=\max_{a} \mathbb{E}S_a.
\end{equation}
The proof of the following theorem is exactly the same as that of Theorem \ref{thm3.1}, except that the expectations $\mathbb{E}[X_{\alpha,a}X_{\beta,b}]$ are now alternated to take into account the self-loop probabilities, in the same way that we did for (\ref{mueqn3}).  For $\beta\in\Gamma_\alpha^{b,i}$, we can compute the upper bound
\begin{equation*}\sum_{a\in\Lambda_\alpha}\sum_{b\in\Lambda_\beta}\mathbb{E}[X_{\alpha,a}X_{\beta,b}]\leq(\phi^*)^{2s(G)-i}(\mu_{2t(G)}^*)^{e(G)+\kappa(G,i)}. 
\end{equation*}
 The rest of the proof is unchanged and we obtain the following bound.

\begin{theorem}\label{thm5.1}Suppose that $G$ is a strictly pseudo-balanced graph, which has $e_i(G)$ pairs of vertices which have $i$ edges between them, with $e_{t(G)}(G)>0$ and $e_i=0$ for all $i>t(G)$, and $s(G)\geq0$ self-loops.  Then, with the  notation \eqref{rho}, \eqref{lamlam}, \eqref{mueqn3}, \eqref{alpha2}, \eqref{gamma2}, \eqref{pi***}, (\ref{theta}),
\begin{align*}d_{TV}(\mathcal{L}(W),CP(\boldsymbol{\lambda}))  &\leq  \frac{c(\boldsymbol{\lambda})\rho(G)^2}{v(G)!}n^{v(G)} \bigg\{ \frac{v(G)^2}{v(G)!}n^{v(G)-1}  (\phi^*)^{2s(G)}\prod_{i=1}^{t(G)} \Big((\mu_i^{**})^{2e_i(G)}\Big)
 \\
 &\quad+ \sum_{i=1}^{v(G)-1} \binom{v(G)}{i}  \frac{n^{v(G)-i}(\phi^*)^{2s(G)-i}(\psi(G))^{e(G)+\kappa_m(G,i)}}{(v(G)-i)!} \bigg\}. 
\end{align*}
\end{theorem}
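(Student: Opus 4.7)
The plan is to follow the argument of Theorem \ref{thm3.1} essentially verbatim, with the combinatorial skeleton (cardinalities of $\Gamma\setminus\Gamma_\alpha^w$ and of $\Gamma_\alpha^{b,i}$, partition of $\Gamma\setminus\{\alpha\}$ into $\Gamma_\alpha^s,\Gamma_\alpha^b,\Gamma_\alpha^w$, and the bound $\eqref{dtv}$) all carrying over unchanged, because none of these depend on whether $G$ has self-loops. The only modification is that each occurrence indicator $X_{\alpha,a}$ now carries an extra self-loop factor, so both the single-edge expectations and the joint expectations pick up additional multiplicative terms controlled by $\phi^*$. The work is to insert these factors in the right places.

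First I would bound $\sum_{a\in\Lambda_\alpha}\mathbb{E}X_{\alpha,a}$. Starting from the generalised mean formula \eqref{mueqn3}, one independently bounds each self-loop factor $\mathbb{E}S_{c_w}\leq\phi^*$ and each multi-edge factor $\mathbb{E}[\binom{Y_{c_u,c_v}}{i_{(u,v)}}]\leq\mu_i^{**}$, exactly as in the passage to \eqref{yineq2}. Using $\sum_{c_1,\ldots,c_{v(G)}}f_{c_1}\cdots f_{c_{v(G)}}=1$, this yields
\begin{equation*}
\sum_{a\in\Lambda_\alpha}\mathbb{E}X_{\alpha,a}\leq (\phi^*)^{s(G)}\prod_{i=1}^{t(G)}(\mu_i^{**})^{e_i(G)}.
\end{equation*}
Combining this with the estimate $|\Gamma\setminus\Gamma_\alpha^w|\leq\rho(G)v(G)^2 n^{v(G)-1}/v(G)!$ from the proof of Theorem \ref{thm3.1} gives the analogue of \eqref{im12}, with a prefactor $(\phi^*)^{2s(G)}$ arising because both $\mathbb{E}X_{\alpha,a}$ and $\mathbb{E}\eta_\alpha$ each carry one $(\phi^*)^{s(G)}$. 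This produces exactly the first term inside the braces in the theorem.

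The main (and only) non-routine step is bounding $\sum_{a\in\Lambda_\alpha}\sum_{b\in\Lambda_\beta}\mathbb{E}[X_{\alpha,a}X_{\beta,b}]$ for $\beta\in\Gamma_\alpha^{b,i}$. For the multi-edge contributions, the analysis of Theorem \ref{thm3.1} applies without change and yields the factor $(\psi(G))^{e(G)+\kappa_m(G,i)}$, since the strictly pseudo-balanced property forces at least $e(G)+\kappa_m(G,i)$ edges in the union graph $\alpha\cup\beta$ once the two copies share $i$ vertices. For the self-loop contributions, $\alpha$ contributes $s(G)$ self-loops and $\beta$ contributes $s(G)$ self-loops; among the $i$ shared vertices, up to $i$ self-loop positions may coincide in $\alpha$ and $\beta$, and for those the joint factor can be bounded by a single $\phi^*$ rather than two. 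Bounding the worst case crudely, the self-loop contribution is at most $(\phi^*)^{2s(G)-i}$. Mutual independence of $\tilde{S}_1,\ldots,\tilde{S}_n$ conditionally on the classes, together with their independence from the edge count variables, lets one factor the expectation into a self-loop part and an edge part, yielding
\begin{equation*}
\sum_{a\in\Lambda_\alpha}\sum_{b\in\Lambda_\beta}\mathbb{E}[X_{\alpha,a}X_{\beta,b}]\leq(\phi^*)^{2s(G)-i}(\psi(G))^{e(G)+\kappa_m(G,i)}.
\end{equation*}

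Finally I would combine this with the bound $|\Gamma_\alpha^{b,i}|\leq\rho(G)\binom{v(G)}{i}n^{v(G)-i}/(v(G)-i)!$ from \eqref{kappa0}, sum over $\alpha\in\Gamma$ (using $|\Gamma|=\rho(G)\binom{n}{v(G)}\leq\rho(G)n^{v(G)}/v(G)!$) and over $1\leq i\leq v(G)-1$, and plug the two resulting bounds into \eqref{dtv} together with the bound on $\sum_{\alpha,a}\mathbb{E}X_{\alpha,a}\mathbb{E}\eta_\alpha$ derived above. The main obstacle is the self-loop overlap accounting in the joint expectation bound; once one accepts the crude worst-case exponent $2s(G)-i$ for shared-vertex self-loops, the rest is a bookkeeping exercise that mirrors the proof of Theorem \ref{thm3.1} line by line.
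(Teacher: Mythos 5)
Your proposal follows the same route as the paper: the paper's own proof is precisely the observation that Theorem \ref{thm3.1}'s argument carries over verbatim once each expectation acquires its self-loop factors, with $\sum_{a\in\Lambda_\alpha}\mathbb{E}X_{\alpha,a}\leq(\phi^*)^{s(G)}\prod_i(\mu_i^{**})^{e_i(G)}$ and the joint expectation for $\beta\in\Gamma_\alpha^{b,i}$ bounded by $(\phi^*)^{2s(G)-i}$ times the edge factor, exactly as you do. Your accounting of the shared-vertex self-loops is in fact slightly more explicit than the paper's, and your use of $\psi(G)$ in the joint bound is the version consistent with the stated theorem.
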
 

\subsection{Poisson approximation}\label{possec}

As was seen in Remark \ref{trirem}, in general a Poisson approximation for the distribution of the number of isomorphic copies of a strictly balanced graph is not valid in the $SBMM(n,\pi,f)$ model.  However, as will be seen in this section, in a certain parameter regime a Poisson approximation is valid.  Of course, in the case $\pi_{a,b,k}=0$ for all $1\leq a,b \leq Q$ and $k\geq2$, then the model reduces to the classical SBM and a Poisson approximation is valid in the small $\pi_{a,b,1}$ regime (see \cite{cgr16}).  In Theorem \ref{thm5.2}, we obtain a parameter regime in which multiple edges are possible (that is sufficient conditions on $\pi_{a,b,k}>0$ for some $1\leq a,b\leq Q$ and $k\geq2$ such that a Poisson approximation is valid.)  Before stating the theorem, we present the Stein-Chen bound for Poisson approximation that we will use in the proof.

For $\alpha\in\Gamma$, let
\begin{equation*}
A_\alpha = \{ \beta \in \Gamma \colon \lvert \alpha \cap \beta \rvert \geq 1 \},
\end{equation*} 
so that $A_\alpha=\Gamma_\alpha^b\cup\Gamma_\alpha^s\cup\{\alpha\}$.  Also, let
\begin{equation*}
\label{thetaeqn}U_{\alpha,a} =\sum_{\beta\in A_\alpha}\sum_{b\in\Lambda_\beta} X_{\beta,b}-X_{\alpha,a}.
\end{equation*}
Then a simple corollary of Theorem 1 in \cite{agg89}, or of Theorem 1.A in \cite{bhj92}, is that  
\begin{equation}\label{dtv44}
d_{TV}(\mathcal{L}(W),Po(\nu)) \leq \nu^{-1}(1-\mathrm{e}^{-\nu}) \sum_{\alpha \in \Gamma}\sum_{a\in\Lambda_\alpha}  \big\{\mathbb{E}X_{\alpha,a} \mathbb{E}\eta_{\alpha}+\mathbb{E}[X_{\alpha,a}U_{\alpha,a}]\big\}.
\end{equation}
Here 
\begin{equation}\label{nunu}\nu:=\mathbb{E}W=\binom{n}{v(G)}\rho(G)\mu(G),
\end{equation}
where we used Equation (\ref{lambdaeqn}) to write down the expectation.

It is worth comparing (\ref{dtv44}) with the bound $d_{TV}(\mathcal{L}(W),CP(\boldsymbol{\lambda})) \leq c(\boldsymbol{\lambda})\epsilon$ that we used for compound Poisson approximation in Section \ref{sec2.3}.  The term $\nu^{-1}(1-\mathrm{e}^{-\nu})$ is present because $c(\boldsymbol{\lambda})=c((\nu,0,0,\ldots))\leq\nu^{-1}(1-\mathrm{e}^{-\nu})$ for the $Po(\nu)$ distribution (see \cite{be83}, Lemma 4).  The quantity $U_{\alpha}$ can be written as $U_{\alpha,a}=V_\alpha+T_{\alpha,a}$, where $T_{\alpha,a}=\sum_{\beta\in \Gamma_\alpha^s\cup\{\alpha\}}\sum_{b\in\Lambda_\beta} X_{\beta,b}-X_{\alpha,a}$. 
Therefore (\ref{dtv}) can be written as
\begin{align}\label{dtv445}
d_{TV}(\mathcal{L}(W),Po(\lambda)) &\leq \nu^{-1}(1-\mathrm{e}^{-\nu}) \sum_{\alpha \in \Gamma}\sum_{a\in\Lambda_\alpha}  \big\{\mathbb{E}X_{\alpha,a} \mathbb{E}\eta_{\alpha}+\mathbb{E}[X_{\alpha,a}V_\alpha]\big\}\nonumber\\ 
&\quad +\nu^{-1}(1-\mathrm{e}^{-\nu}) \sum_{\alpha \in \Gamma}\sum_{a\in\Lambda_\alpha}\mathbb{E}[X_{\alpha,a}T_{\alpha,a}].
\end{align}
We have already bounded the first term of (\ref{dtv445}) in Theorem \ref{thm2.1}, so to arrive at a Poisson approximation we must also be able to bound the second term.  

\begin{theorem}\label{thm5.2} Suppose that $G$ is a strictly balanced graph.  Then, with the  notation \eqref{rho}, \eqref{mueqn}, \eqref{alpha}, \eqref{gamma}, \eqref{pi**} and \eqref{nunu},
\begin{align}d_{TV}(\mathcal{L}(W),Po(\nu))  &\leq M_{n,\pi,f}(G)+\nu^{-1}(1-\mathrm{e}^{-\nu})\frac{\rho(G)^2}{v(G)!}n^{v(G)}(\mu_1^*)^{e(G)-1}q_2^*\nonumber \\
&= \nu^{-1}(1-\mathrm{e}^{-\nu})\frac{\rho(G)^2}{v(G)!}n^{v(G)}(\mu_1^*)^{e(G)-1} \bigg\{  \frac{v(G)^2}{v(G)!}n^{v(G)-1} (\mu_1^*)^{e(G)+1} 
\nonumber \\
\label{ermg1111thm799}  &\quad+ q_2^* + \sum_{i=1}^{v(G)-1} \binom{v(G)}{i}  \frac{n^{v(G)-i}(\mu_1^*)^{\kappa(G,i)+1}}{(v(G)-i)!} \bigg\}, 
\end{align}
where $M_{n,\pi,f}(G)$ is the bound (\ref{ermg1111thm}) of Theorem \ref{thm2.1} and $q_2^*=\max_{1\leq a,b \leq Q}\sum_{k=2}^\infty\pi_{a,b,k}$.
\end{theorem}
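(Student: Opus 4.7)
The plan is to start from the Stein--Chen bound (\ref{dtv44}) and exploit the decomposition (\ref{dtv445}), which separates the total variation distance into two pieces: the compound-Poisson quantity $\nu^{-1}(1-\mathrm{e}^{-\nu})\sum_\alpha\sum_a\{\mathbb{E}X_{\alpha,a}\mathbb{E}\eta_\alpha+\mathbb{E}[X_{\alpha,a}V_\alpha]\}$ already controlled in the proof of Theorem \ref{thm2.1}, and the extra Poisson-specific term $\nu^{-1}(1-\mathrm{e}^{-\nu})\sum_\alpha\sum_a\mathbb{E}[X_{\alpha,a}T_{\alpha,a}]$. The combinatorial manipulations of Theorem \ref{thm2.1} never used a specific form for the Stein factor, so replacing $c(\boldsymbol{\lambda})$ throughout by $\nu^{-1}(1-\mathrm{e}^{-\nu})$ gives the summand $M_{n,\pi,f}(G)$ in the claimed bound.

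The remaining task is to bound the extra term. In the simple-graph setting one has the factorisation $\sum_{a\in\Lambda_\alpha}X_{\alpha,a}=W_\alpha:=\prod_{(u,v)\in E(\alpha)}\tilde{Y}_{u,v}$, and hence
\begin{equation*}
\sum_{a\in\Lambda_\alpha}X_{\alpha,a}T_{\alpha,a}=W_\alpha(Z_\alpha-1)=W_\alpha(W_\alpha-1)+W_\alpha\sum_{\beta\in\Gamma_\alpha^s}W_\beta.
\end{equation*}
Conditioning on the class labels makes the $\tilde{Y}_{u,v}$ independent, so
\begin{equation*}
\mathbb{E}[W_\alpha(W_\alpha-1)\,|\,\text{classes}]=\prod_{(u,v)\in E(\alpha)}\mathbb{E}[Y_{c_u,c_v}^2]-\prod_{(u,v)\in E(\alpha)}\mathbb{E}[Y_{c_u,c_v}];
\end{equation*}
telescoping this difference lets me pull out a single factor $\mathbb{P}(Y_{c_u,c_v}\geq 2)\leq q_2^*$ at the edge at which the two products differ, while bounding the remaining $e(G)-1$ edge factors by $\mu_1^*$ from (\ref{pi**}). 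For the term $W_\alpha\sum_{\beta\in\Gamma_\alpha^s}W_\beta$, strict balance forces $|E(\alpha)\cup E(\beta)|\geq e(G)+1$ for every $\beta\in\Gamma_\alpha^s$, so each contribution is of order $(\mu_1^*)^{e(G)+1}$ and is dominated by the $i=v(G)-1$ piece of $M_{n,\pi,f}(G)$. Summing over $\alpha\in\Gamma$ with $|\Gamma|\leq\rho(G)n^{v(G)}/v(G)!$ and picking up an additional factor $\rho(G)$ from the size of the clump produces the extra additive term in the bound.

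The main obstacle will be the telescoping step: the natural identity produces the factorial moment $\mathbb{E}[Y_{a,b}(Y_{a,b}-1)]=\sum_{k\geq 2}k(k-1)\pi_{a,b,k}$ rather than the tail probability $q_2^*=\sum_{k\geq 2}\pi_{a,b,k}$. Reorganising so that the extra factorial weight $k(k-1)$ is absorbed against the $\mu_1^*$ factors, leaving $q_2^*$ as the surviving probabilistic factor, is the key technical subtlety; the companion verification that the $\Gamma_\alpha^s$ piece is absorbed into $M_{n,\pi,f}(G)$ is a direct consequence of \eqref{gamma} via the strict balance of $G$.
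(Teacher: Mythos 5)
Your overall strategy is the paper's: start from the decomposition (\ref{dtv445}), reuse the bounds from the proof of Theorem \ref{thm2.1} for the $\mathbb{E}X_{\alpha,a}\mathbb{E}\eta_\alpha$ and $\mathbb{E}[X_{\alpha,a}V_\alpha]$ terms (with $c(\boldsymbol{\lambda})$ replaced by $\nu^{-1}(1-\mathrm{e}^{-\nu})$), and then separately control $\sum_{\alpha}\sum_{a}\mathbb{E}[X_{\alpha,a}T_{\alpha,a}]$. The identity $\sum_{a}X_{\alpha,a}T_{\alpha,a}=W_\alpha(W_\alpha-1)+W_\alpha\sum_{\beta\in\Gamma_\alpha^s}W_\beta$ is also correct. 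However, the two ways you propose to finish are both genuinely gapped.

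First, the diagonal term. Your telescoping of $\prod\mathbb{E}[Y_{c_u,c_v}^2]-\prod\mathbb{E}[Y_{c_u,c_v}]$ produces, at the distinguished edge, the factorial moment $\mathbb{E}[Y(Y-1)]=\sum_{k\geq2}k(k-1)\pi_{a,b,k}$ (and second moments, not first moments, at the edges preceding it). The "reorganisation" you flag as the key subtlety cannot be carried out: $\mathbb{E}[Y(Y-1)]$ and $q_2^*=\mathbb{P}(Y\geq2)$ live at the \emph{same} edge, so there are no $\mu_1^*$ factors available to absorb the weight $k(k-1)$, and one always has $\mathbb{E}[Y(Y-1)]\geq 2\,\mathbb{P}(Y\geq 2)$, with the ratio unbounded (indeed $\mathbb{E}[Y^2]$ may be infinite while $q_2^*$ is tiny). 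The case $e(G)=1$ already kills the plan. The paper does not pass through $\mathbb{E}[W_\alpha^2]$ at all: it bounds the joint occurrence probability at the level of events, observing that $X_{\alpha,a}=X_{\alpha,b}=1$ with $a\neq b$ forces at least two parallel edges at some edge position (contributing $q_2^*$) and at least one edge at each of the other $e(G)-1$ positions (each contributing $q_{a,b,1}\leq\mu_1^*$). If you want to recover the stated bound you must argue at that level, not via moments of $W_\alpha$.

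Second, the off-diagonal term $W_\alpha\sum_{\beta\in\Gamma_\alpha^s}W_\beta$ cannot be "absorbed into $M_{n,\pi,f}(G)$": $M_{n,\pi,f}(G)$ bounds only the $V_\alpha$ contribution, which runs over $\Gamma_\alpha^b$, and the theorem is an explicit non-asymptotic inequality whose extra additive term is exactly $\nu^{-1}(1-\mathrm{e}^{-\nu})\rho(G)|\Gamma|\,q_2^*(\mu_1^*)^{e(G)-1}$ (up to the bound $|\Gamma|\leq\rho(G)n^{v(G)}/v(G)!$). Moreover the claimed domination is only an order statement valid in the non-degenerate scaling regime; in general the ratio of your $(\mu_1^*)^{e(G)+1}$ contribution to the $i=v(G)-1$ summand of $M_{n,\pi,f}(G)$ behaves like $\big(n(\mu_1^*)^{\kappa(G,v(G)-1)-1}\big)^{-1}$ and need not be bounded. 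The paper instead bounds each of the $|\Gamma_\alpha^s\cup\{\alpha\}|=\rho(G)$ terms by the same worst case $q_2^*(\mu_1^*)^{e(G)-1}$ (the case $\beta=\alpha$, which minimises the number of edge positions in the union graph), which is precisely where the single factor $\rho(G)$ in the extra term comes from. You need to supply an argument of this form rather than a regime-dependent domination claim.
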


\begin{proof}We bound  $\sum_{\alpha \in \Gamma}\sum_{a\in\Lambda_\alpha}\mathbb{E}[X_{\alpha,a}T_{\alpha,a}]$, where $T_{\alpha,a}=\sum_{\beta\in \Gamma_\alpha^s\cup\{\alpha\}}\sum_{b\in\Lambda_\beta} X_{\beta,b}-X_{\alpha,a}$.  First, we note that $|\Gamma_\alpha^s\cup\{\alpha\}|=\rho(G)$.  To bound $\mathbb{E}[X_{\alpha,a}T_{\alpha,a}]$, we require bounds on $\sum_{a\in\Lambda_\alpha}\sum_{b\in\Lambda_\beta}\mathbb{E}[X_{\alpha,a}X_{\beta,b}]$, where $|\alpha\cap\beta|=v(G)$ and $(\alpha,a)\not=(\beta,b)$.  For all such $\alpha,\beta,a,b$, the largest value that $\sum_{a\in\Lambda_\alpha}\sum_{b\in\Lambda_\beta}\mathbb{E}[X_{\alpha,a}X_{\beta,b}]$ takes is when $\alpha=\beta$, because this minimises the number of edge positions in the intersection graph $\alpha\cap\beta$.  There are $e(G)$ possible edge locations in the intersection graph, and because $(\alpha,a)\not=(\alpha,b)$ it follows that there must be at least two edges at one edge position and at least one edges at the remaining $e(G)-1$ edges positions.  
Hence it follows that
\begin{equation*}\sum_{a\in\Lambda_\alpha}\sum_{b\in\Lambda_\beta}\mathbb{E}[X_{\alpha,a}X_{\beta,b}]\leq q_2^*(\mu_1^*)^{e(G)-1}
\end{equation*}
for all $\alpha,\beta$ such that $|\alpha\cap\beta|=v(G)$.  Therefore
\begin{equation*}\sum_{\alpha \in \Gamma}\sum_{a\in\Lambda_\alpha}\mathbb{E}[X_{\alpha,a}T_{\alpha,a}]\leq \rho(G)q_2^*(\mu_1^*)^{e(G)-1}.
\end{equation*}
Using this inequality to bound the second term of (\ref{dtv445}) and using the bound (\ref{ermg1111thm}) of Theorem \ref{thm2.1} to bound the first term yields (\ref{ermg1111thm799}), as required.
\end{proof}

\begin{remark}In the classic $SBM(n,\pi,f)$ model, there are no multiple edges (that is $\pi_{a,b,k}=0$ for all $a,b$ if $k\geq2$), and thus $q_2^*=0$.  Furthermore, $\mu_1^*=\max_{1\leq a,b \leq Q}\pi_{a,b,1}=:\pi^*$.  The bound (\ref{ermg1111thm799}) then can be seen to be exactly the same as the bound of Theorem 2.1 of \cite{cgr16}, as one would hope for.  
\end{remark}

\begin{remark}\label{rem5.4}In Remark \ref{rem1}, we showed that if there exist universal constants $c$ and $C$ such that $cn^{-1/d(G)}\leq \mathbb{E}Y_{a,b}\leq Cn^{-1/d(G)}$ for all $x$, then the limiting $Po(\nu)$ distribution is non-degenerate, and the bound $M_{n,\pi,f}(G)$ tends to 0 as $n\rightarrow\infty$.  If we also have that $q_2^*\ll n^{-1/d(G)}$, then $d_{TV}(\mathcal{L}(W),Po(\nu))\rightarrow0$ as $n\rightarrow\infty$.  Given that $\mathbb{E}Y_{a,b}=O(n^{-1/d(G)})$, then the condition $q_2^*\ll n^{-1/d(G)}$ is equivalent to 
\begin{equation}\label{cond1}q_2^*=\max_{1\leq a,b\leq Q}\big(\sum_{l=2}^\infty \pi_{a,b,l}\big)\ll \max_{1\leq a,b\leq Q}\pi_{a,b,1}=O(n^{-1/d(G)}).
\end{equation}

A simple and natural example in which condition (\ref{cond1}) is met is the geometric probabilities $\pi_{a,b,k}=p^k (1-p)$, $k \geq 0$, for all $1\leq a,b\leq Q$.  The condition is met if $p\ll 1$.  The model of \cite{kn11}, which has Poisson probabilities $\pi_{a,b,k}=\mathrm{e}^{-\omega_{a,b}}\omega_{a,b}^k/k!$, $k\geq0$, also satisfies condition (\ref{cond1}) if $\max_{1\leq a,b\leq Q}\omega_{a,b}\ll 1$.  Indeed, we obtain an explicit bound for the Poisson approximation of subgraph count distributions in this model in the following corollary.
\end{remark}

\begin{corollary}\label{cor5.5}Suppose we are in the setting of Theorem \ref{thm5.2} be satisfied and suppose that $\pi_{a,b,k}=\mathrm{e}^{-\omega_{a,b}}\omega_{a,b}^k/k!$, $k\geq0$.  Let $\omega^*=\max_{1\leq a,b\leq Q}\omega_{a,b}$.  Then
\begin{align}d_{TV}(\mathcal{L}(W),Po(\nu))  & \leq\nu^{-1}(1-\mathrm{e}^{-\nu})\frac{\rho(G)^2}{v(G)!}n^{v(G)}(\omega^*)^{e(G)-1} \bigg\{   \frac{v(G)^2}{v(G)!} n^{v(G)-1}    (\omega^*)^{e(G)+1} 
\nonumber \\
\label{ermg1111thm7997}  &\quad+ \frac{1}{2}(\omega^*)^2+ \sum_{i=1}^{v(G)-1} \binom{v(G)}{i}  \frac{n^{v(G)-i}(\omega^*)^{\kappa(G,i)+1}}{(v(G)-i)!} \bigg\}.
\end{align}

Suppose now that there exist universal constants $c$ and $C$ such that $cn^{-1/d(G)}\leq \omega_{a,b}\leq Cn^{-1/d(G)}$ for all $a,b$.  Then 
\begin{equation}\label{ineqqq}\frac{\rho(G)}{v(G)^{v(G)}}c^{e(G)}\leq\nu\leq\frac{\rho(G)}{v(G)!}C^{e(G)},
\end{equation}
and so the limiting $Po(\nu)$ distribution is non-degenerate as $n\rightarrow 0$.  Moreover, there then exists a constant $K(G)$, independent of $n$, such that
\begin{equation}\label{ddtt}d_{TV}(\mathcal{L}(W),Po(\nu))\leq K(G)\max\big\{n^{-1},n^{-1/d(G)},\min\{n^{1-\alpha(G)/d(G)},n^{-\gamma(G)/d(G)}\}\big\}.
\end{equation}
\end{corollary}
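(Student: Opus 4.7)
The plan is to specialise Theorem \ref{thm5.2} to the Poisson probabilities $\pi_{a,b,k}=\mathrm{e}^{-\omega_{a,b}}\omega_{a,b}^k/k!$ and then read off the $n$-dependence under the regime $cn^{-1/d(G)}\le\omega_{a,b}\le Cn^{-1/d(G)}$. First, $Y_{a,b}\sim Po(\omega_{a,b})$ gives $\mathbb{E}Y_{a,b}=\omega_{a,b}$ and hence $\mu_1^*=\omega^*$. Secondly, I need a bound on $q_2^*=\max_{a,b}\sum_{k=2}^{\infty}\mathrm{e}^{-\omega_{a,b}}\omega_{a,b}^k/k!$. For each $(a,b)$,
\begin{equation*}
\sum_{k=2}^{\infty}\mathrm{e}^{-\omega}\frac{\omega^k}{k!}=1-\mathrm{e}^{-\omega}(1+\omega)\le\tfrac{1}{2}\omega^{2},
\end{equation*}
which follows by noting that $f(\omega):=\tfrac{1}{2}\omega^{2}-1+\mathrm{e}^{-\omega}(1+\omega)$ satisfies $f(0)=0$ and $f'(\omega)=\omega(1-\mathrm{e}^{-\omega})\ge 0$ for $\omega\ge 0$. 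Thus $q_2^*\le\tfrac{1}{2}(\omega^*)^{2}$. Substituting $\mu_1^*=\omega^*$ and this estimate for $q_2^*$ directly into \eqref{ermg1111thm799} of Theorem \ref{thm5.2} produces \eqref{ermg1111thm7997}.

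For the non-degeneracy sandwich \eqref{ineqqq}, I would use $\frac{n^{v(G)}}{v(G)^{v(G)}}\le\binom{n}{v(G)}\le\frac{n^{v(G)}}{v(G)!}$ together with the explicit expression
\begin{equation*}
\mu(G)=\sum_{c_1,\ldots,c_{v(G)}=1}^{Q}f_{c_1}\cdots f_{c_{v(G)}}\prod_{(u,v)\in E(G)}\omega_{c_u,c_v},
\end{equation*}
specialised from \eqref{mueqn}. The two-sided bound on $\omega_{a,b}$, the identity $e(G)/d(G)=v(G)$, and $\sum_{c}f_{c}=1$ then give $c^{e(G)}n^{-v(G)}\le\mu(G)\le C^{e(G)}n^{-v(G)}$, which plugged into \eqref{nunu} yields \eqref{ineqqq}. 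In particular $\nu=\Theta(1)$, and the standard estimate $\nu^{-1}(1-\mathrm{e}^{-\nu})\le 1$ shows that the prefactor in \eqref{ermg1111thm7997} contributes only a constant.

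For the rate estimate \eqref{ddtt}, I would factor $\frac{\rho(G)^{2}}{v(G)!}n^{v(G)}(\omega^*)^{e(G)-1}$ out of the right-hand side of \eqref{ermg1111thm7997}. Using $\omega^*\le Cn^{-1/d(G)}$ together with $e(G)=v(G)d(G)$, this common factor scales like a constant times $n^{1/d(G)}$. Evaluating the three terms inside the braces: the first scales like $n^{v(G)-1}(\omega^*)^{e(G)+1}=O(n^{-1-1/d(G)})$ and contributes $O(n^{-1})$; the second is $\tfrac{1}{2}(\omega^*)^2=O(n^{-2/d(G)})$ and contributes $O(n^{-1/d(G)})$; and for each $i$ in the third, I get $n^{v(G)-i}(\omega^*)^{\kappa(G,i)+1}$, contributing $n^{v(G)-i-\kappa(G,i)/d(G)}$ after the prefactor. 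Using the two lower bounds built into \eqref{kappa}, namely $\kappa(G,i)\ge(v(G)-i)\alpha(G)$ and $\kappa(G,i)\ge e(G)-id(G)+\gamma(G)$, the $i$-th summand is at most $n^{(v(G)-i)(1-\alpha(G)/d(G))}$ and at most $n^{-\gamma(G)/d(G)}$; since strict balancedness forces $\alpha(G)>d(G)$, the first of these is maximised at $i=v(G)-1$ yielding $n^{1-\alpha(G)/d(G)}$. Taking the sharper of the two bounds for each $i$ and summing then produces the $\min\{n^{1-\alpha(G)/d(G)},n^{-\gamma(G)/d(G)}\}$ term in \eqref{ddtt}.

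The derivation is essentially bookkeeping, and the only non-automatic step is the elementary Poisson tail inequality $q_2^*\le\tfrac12(\omega^*)^2$; once this is in hand, the rest is a direct substitution into Theorem \ref{thm5.2} followed by an exponent computation that collapses neatly because of the identity $e(G)/d(G)=v(G)$.
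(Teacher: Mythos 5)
Your proposal is correct and follows essentially the same route as the paper: specialise Theorem \ref{thm5.2} by computing $\mu_1^*=\omega^*$ and establishing the Poisson tail bound $q_2^*\le\tfrac12(\omega^*)^2$ by elementary calculus, then obtain \eqref{ineqqq} and \eqref{ddtt} by the exponent bookkeeping of Remark \ref{rem1} (the paper simply cites that remark and checks that the new $q_2^*$ term contributes $O(n^{-1/d(G)})$, whereas you carry out the computation explicitly). The only remark worth making is that your step ``strict balancedness forces $\alpha(G)>d(G)$'' is a standard fact (cf.\ Ruci\'nski--Vince) that the paper also uses implicitly in Remark \ref{rem1}; it is correct and needed to identify $i=v(G)-1$ as the dominant summand.
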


\begin{proof}We use the bound (\ref{ermg1111thm799}) of Theorem \ref{thm5.2}, so we are required to bound $\mu_1^*$ and $q_2^*$.  Recall from Example \ref{ex3.4} that $\mu_1^*=\omega^*$.  Also, 
\begin{equation*}q_{a,b,2}=\sum_{k=2}^\infty \mathrm{e}^{-\omega_{a,b}}\frac{\omega_{a,b}^k}{k!}= 1-(1+\omega_{a,b})\mathrm{e}^{-\omega_{a,b}}\leq 1-(1+\omega^*)\mathrm{e}^{-\omega^*}\leq \frac{1}{2}(\omega^*)^2,
\end{equation*}
where the inequalities may be proved by simple calculus.  Therefore $q_2^*\leq \frac{1}{2}(\omega^*)^2$.  Inserting these bounds into (\ref{ermg1111thm799}) yields (\ref{ermg1111thm7997}), as required.

We obtain (\ref{ineqqq}) by applying exactly the same argument as the one used in Remark \ref{rem1}.  The bound (\ref{ddtt}) is also very similar to the total variation distance bound of Remark \ref{rem1}.  Here we have an additional term to consider:
\begin{equation*}\frac{1}{2\nu}(1-\mathrm{e}^{-\nu})\frac{\rho(G)^2}{v(G)!}n^{v(G)}(\omega^*)^{e(G)+1}. 
\end{equation*}
This term is order $n^{-1/d(G)}$, and so we obtain (\ref{ddtt}), which completes the proof.
\end{proof}

\subsection*{Acknowledgements}
MC acknowledges support from the Department of Statistics, University of Oxford, for a Summer Studentship.  RG acknowledges support from EPSRC
grant EP/K032402/1 and is currently supported by a Dame Kathleen Ollerenshaw Research Fellowship.  GR acknowledges support from EPSRC grant EP/K032402/1 as well as from the Alan Turing Institute.  The authors would like to thank the referees for their helpful comments and suggestions.

\footnotesize

\end{document}